\newtheorem{theorem}{Theorem}[section]
\newtheorem{proposition}{Proposition}[section]
\newtheorem{lemma}{Lemma}[section]
\begin{document}

\title[The critical Choquard equation]{High and low perturbations of the critical
Choquard equation on the Heisenberg group}
 
\date{}
\maketitle

\vspace{ -1\baselineskip}

{\small
\begin{center}
 {\sc Shujie Bai} \\
College of Mathematics, Changchun Normal University\\
 Changchun, 130032, P.R. China \\[10pt]
{\sc Du\v{s}an D. Repov\v{s}}\\
Faculty of Education,
Faculty of Mathematics and Physics\\
University of Ljubljana, Ljubljana, 1000, Slovenia\\
Institute of Mathematics, Physics and Mechanics,
 Ljubljana, 1000, Slovenia\\[10pt]
{\sc Yueqiang Song}
\footnote{Corresponding author: songyq16@mails.jlu.edu.cn}\\
College of Mathematics, Changchun Normal University\\
 Changchun, 130032, P.R. China \\[10pt]
 
\end{center}}

\numberwithin{equation}{section}
\allowdisplaybreaks

 \smallskip

 \begin{quote}
\footnotesize
{\bf Abstract.}
We study the
following critical Choquard equation on
the Heisenberg group:
\begin{equation*}
 \begin{cases}
 \displaystyle
{-\Delta_H u }={\mu}
|u|^{q-2}u+\int_{\Omega}
\frac{|u(\eta)|^{Q_{\lambda}^{\ast}}}
{|\eta^{-1}\xi|^{\lambda}}
d\eta|u|^{Q_{\lambda}^{\ast}-2}u
&\mbox{in }\  \Omega, \\
 u=0 &\mbox{on }\ \partial\Omega,
\end{cases}
\end{equation*}
where
$\Omega\subset \mathbb{H}^N$
is a smooth bounded domain,
$\Delta_H$ is the Kohn-Laplacian
on the Heisenberg group
$\mathbb{H}^N$,
$1<q<2$ or
$2<q<Q_\lambda^\ast$,
$\mu>0$,
$0<\lambda<Q=2N+2$, and
$Q_{\lambda}^{\ast}=\frac{2Q-\lambda}{Q-2}$
is the critical exponent.
Using the concentration compactness
principle and the critical point theory,
we prove that the above
problem has the least two positive
solutions for
$1<q<2$ in the case
of low perturbations (small values of
$\mu$), and
has a nontrivial solution for
$2<q<Q_\lambda^\ast$  in the case of
high perturbations (large values of
$\mu$). Moreover, for
$1<q<2$,
we also show that there is a positive
ground state solution, and for
$2<q<Q_\lambda^\ast$, there are at least
$n$
pairs of nontrivial weak solutions.

\medskip

\emph{\bf Keywords:} Choquard equation; Heisenberg group;
Concentration compactness principle; Hardy-Littlewood-Sobolev
critical exponent.

\medskip

\emph{\bf Math. Subj. Classif. (2020):}  35J20, 35R03, 46E35

\end{quote}

\section{Introduction and main results}

In this paper, our aim is to study
the existence of solutions for
the following critical Choquard
equation on the Heisenberg group:
\begin{equation}\label{1.1}
 \begin{cases}
 \displaystyle
{-\Delta_H u }=\mu
|u|^{q-2}u+\int_{\Omega}
\frac{|u(\eta)|^{Q_{\lambda}^{\ast}}}
{|\eta^{-1}\xi|^{\lambda}}
d\eta|u|^{Q_{\lambda}^{\ast}-2}u
&\mbox{in }\  \Omega, \\
 u=0  &\mbox{on }\ \partial\Omega,
\end{cases}
\end{equation}
where
$\Omega\subset \mathbb{H}^N$
is a smooth bounded
domain,
$\Delta_H$ is the Kohn-Laplacian
on the Heisenberg group
$\mathbb{H}^N$,
$1<q<2$ or
$2<q<Q_{\lambda}^{\ast}$,
$\mu>0$,
$0<\lambda<Q
=2N+2$, and
$Q_{\lambda}^{\ast}=\frac{2Q-\lambda}{Q-2}$
is the
critical exponent.

The study of this problem was mainly
inspired by two aspects. On
the one hand, in the Euclidean case,
more and more mathematicians are
beginning to pay attention to the
Choquard equation. As is well known,
Fr\"{o}hlich \cite{HF} and Pekar
\cite{kh} established the following
Choquard equation
$$
-\Delta u+u=
\Big ( \frac{1}{|x|}\ast|u|^{2}
\Big ) u\quad \text{in}\  \mathbb{R}^{3},
$$
for the first time in their pioneering work of the modeling of
quantum polaron. Such problems are often referred to as the
nonlinear Schr\"{o}dinger-Newton equation. Many authors began to
study these problems by using variational methods. For example,
Lions \cite{Lions2} obtained the existence of an infinite number of
radially symmetric solutions in $H^1(\mathbb{R}^N)$. Ackermann
\cite{ack} proved the existence of an infinite number of
geometrically different weak solutions for a general case was
established. Moroz and Van Schaftingen \cite{moroz4, moroz2}
obtained the properties of the ground state solutions, and also
proved that these solutions decay asymptotically at infinity.
Recently, more and more mathematicians have shown a strong interest
in studying critical Choquard type equations. Br\'{e}zis and Lieb
\cite{HB} originally addressed the critical problem in his seminal
paper, which dealt with the Laplacian equations. Liang et al.
\cite{liang2} proved the multiplicity results of the
Choquard-Kirchhoff type equations with Hardy-Littlewood-Sobolev
critical exponents. More results about Choquard equations are
available in \cite{liang3, PXZ,  WX, zh1}.

On the other hand, the study of nonlinear partial differential
equations on the Heisenberg group has brought about widespread
attention of many researchers. One of the reasons to study such
equations is due to their many significant applications. Over the
last few decades, many scholars have paid close attention to
Heisenberg group's geometric analysis because of its significant
applications in quantum mechanics, partial differential equations
and other fields. For example, Liang and Pucci \cite{Liang5} applied
the Symmetric Mountain Pass Theorem to considering a class of the
the critical Kirchhoff-Poisson systems on the Heisenberg group.
Pucci and Temperini \cite{pu2} proved the existence of entire
nontrivial solutions for the $(p, q)$ critical systems on the
Heisenberg group by an application of variational methods. Pucci
\cite{pu4}  applied the Mountain Pass Theorem and the Ekeland
variational principle to prove the existence of nontrivial
nonnegative solutions of the Schr\"{o}dinger-Hardy system on the
Heisenberg group. For more fascinating results, see \cite{an, bor,
Liu2, Liu,  pu3, pu4, pu5, pu6}. However, once we turn our attention
to the critical Choquard equation  on the Heisenberg group, we
immediately notice that the literature is relatively sparse.
Recently, Goel and Sreenadh \cite{go} have studied the following
critical Choquard equation on the Heisenberg group:
\begin{equation*}
\left\{
\begin{array}{lll}
-\Delta_Hu=au+
\Big (
\displaystyle
\int_\Omega
\frac{|u(\eta)|^{Q_\lambda^\ast}}
{|\eta^{-1}\xi|^\lambda}
\Big )
|u|^{Q_\lambda^\ast-2}u
&\mbox{in}\  \Omega, \\
u=0
&\mbox{on}\ \partial\Omega.
\end{array} \right.\end{equation*}
They applied the boot-strap method,
iteration techniques, the
linking theorem, and the Mountain
Pass Theorem to obtain the
regularity of solutions and
nonexistence of solutions for this kind
of problems.

Sun et al. \cite{sun} studied the following critical
Choquard-Kirchhoff problem on the Heisenberg group:
$$
M(\|u\|^2)(-\Delta_{H}u+u)=
\int_{\mathbb{H}^N}
\frac{|u(\eta)|^{Q_\lambda^\ast}}
{|\eta^{-1}\xi|^\lambda}
d\eta|u|^{Q_\lambda^\ast-2}u+\mu f(\xi,u),
$$
where
$f$ is a Carath\'{e}odory function,
$M$ is the Kirchhoff
function,
$\mu>0$ is a parameter, and
$Q_\lambda^\ast=\frac{2Q-\lambda}{Q-2}$
is the critical exponent  in
the sense of Hardy-Littlewood-Sobolev
inequality. A new version of
the concentration-compactness
principle of the Choquard equation on
the Heisenberg group was established.
Moreover, they also applied
the Mountain Pass Theorem to
obtain the existence of nontrivial
solutions for the above-mentioned
problem under non-degenerate and
degenerate conditions.

Inspired by the above achievements,
with the help of the
concentration compact principle and
the critical point theory, we
prove that problem \eqref{1.1}
has at least two positive
solutions for
$1<q<2$ and
$\mu$ small enough, and this equation has
a nontrivial solution for
$2<q<Q_\lambda^\ast$ and
$\mu$ large
enough. Moreover, for
$1<q<2$, we also show that there is a positive
ground state solution for problem \eqref{1.1},
and for
$2<q<Q_\lambda^\ast$,  there are at least
$n$ pairs of nontrivial
weak solutions.

Before presenting the main results of
this paper, we list some
notions about the Heisenberg group.
 Let
$\mathbb{H}^N$ be the
Heisenberg group. If
$\xi=(x,y,t)\in\mathbb{H}^N$, then the
definition of this group operation is
$$
 \tau_{\xi}(\xi')=\xi\circ\xi'=
 (x+x',y+y',t+t'+2(x'y-y'x))
 \ \ \mbox{for all }\ \xi,\xi'\in \mathbb{H}^N.
 $$
$\xi^{-1}=-\xi$ is the inverse, and therefore
$(\xi\circ
\xi')^{-1} = (\xi')^{-1}\circ {\xi}^{-1}$.

The definition of a natural group of dilations on
$\mathbb{H}^N$ is
$\delta_s(\xi)=(sx,sy,s^2t)$ for all
$s>0$. Hence,
$\delta_s(\xi_0\circ\xi) =
\delta_s(\xi_0)\circ \delta_s(\xi)$. It
can be easily proved that the
Jacobian determinant of dilatations
$\delta_s: \mathbb{H}^N\to \mathbb{H}^N$
is constant and equal to
$s^Q$ for all
$\xi=(x,y,t)\in \mathbb{H}^N$.
The natural number
$Q=2N+2$ is called the homogeneous
dimension of
$\mathbb{H}^N$ and
the critical exponents is
$Q^{*}:= \frac{2Q}{Q-2}$.
The Kor\'{a}nyi
norm is defined as follows
$$|\xi|_{H}=
\left[(x^2+y^2)^2+t^2\right]^{\frac{1}{4}}\
 \ \mbox{for all}\ \ \xi\in \mathbb{H}^N,
$$
and is derived from an anisotropic
dilation on the Heisenberg group.
Hence, the homogeneous degree of the
Kor\'{a}nyi norm is equal to 1, in terms of dilations
$$
\delta_s:(x,y,t)\mapsto(sx,sy,s^2t)\ \
 \mbox{for all }\ s>0.
 $$
The set
$$
B_{H}(\xi_{0},r)=\{\xi \in
\mathbb{H}^{N}:d_{H}(\xi_0,\xi)<r\}
$$
 denotes the  Kor\'{a}nyi open
ball of radius r centered at
$\xi_{0}$. For the sake of simplicity,
we denote
$B_{r} = B_{r}(O)$, where
$O = (0,0)$ is the natural
origin of
$\mathbb{H}^{N}$.

The following vector fields
$$
X_{j}=\frac{\partial}{\partial
x_{j}}+2y_{j}\frac{\partial}{\partial t},
\quad
Y_{j}=\frac{\partial}{\partial y_{j}}-2x_{j}
\frac{\partial}{\partial
t},\quad T=\frac{\partial}{\partial t},
$$
generate the real Lie algebra of left
invariant vector fields for
$j=1,\cdots,n$,
which forms a basis satisfying
the Heisenberg regular commutation relation on
$\mathbb{H}^{N}$.
This means that
$$
[X_{j}, Y_{j}]=-4\delta_{jk}T,
\quad
[Y_{j},Y_{k}]=[X_{j},X_{k}]=[Y_{j},T]=
[X_{j},T]=0.
$$
The so-called horizontal
vector field is just a vector field with
the span of
$[X_{j}, Y_{j}]^{n}_{j=1}$.
The Heisenberg gradient on
$\mathbb{H}^{N}$ is
$$
\nabla_H
=(X_{1},X_{2},\cdots,X_{n},Y_{1},Y_{2},
\cdots,Y_{n}),
$$
and the Kohn
Laplacian on
$\mathbb{H}^{N}$
is given by
\begin{align*}
\Delta_{H} & =\sum_{j=1}^{N}X_{j}^{2}+Y_{j}^{2}
\\
&
=
\sum_{j=1}^{N}
 \Big [\frac{\partial^{2}}
{\partial x_{j}^{2}}+\frac{\partial^{2}}
{\partial y_{j}^{2}}+4y_{j}
\frac{\partial^{2}}
{\partial x_{j}\partial t}-4x_{j}
\frac{\partial^{2}}
{\partial x_{j}\partial t}+
4(x_{j}^{2}+y_{j}^{2})\frac{\partial^{2}}
{\partial t^{2}}
\Big ].
\end{align*}

The Haar measure is invariant under the left translations of the
Heisenberg group and is $Q$-homogeneous in terms of dilations. More
precisely, it is consistent with the $(2n + 1)$-dimensional Lebesgue
measure. Hence, as is shown in Leonardi and Masnou \cite{Leonardi},
the topological dimension $2N + 1$ of $\mathbb{H}^{N}$ is strictly
less than its Hausdorff dimension $Q = 2N + 2$. Next, $|\Omega|$
denotes the $(2N + 1)$ dimensional Lebesgue measure of any
measurable set $\Omega\subseteq \mathbb{H}^{N}$. Therefore,
$$
|\delta_{s}(\Omega)|=s^{Q}|\Omega|,
\quad d(\delta_{s}\xi)=s^{Q}d\xi
$$
and
$$
|B_{H}(\xi_{0},r)|
=\alpha_{Q}r^{Q},\quad
\mbox{where}\quad
 \alpha_{Q}=|B_{H}(0,1)|.
$$

Now, we are ready to present our main results.

\begin{theorem}\label{the1.1}
Let
$\Omega\subset \mathbb{H}^{N}$ be
a smooth bounded domain and
$1<q<2$. Then there exists
$\mu_{\ast}>0$ such that if
$\mu\in(0,
\mu_{\ast})$, then problem \eqref{1.1}
has at least two positive
solutions. Moreover, problem \eqref{1.1}
has a positive ground state
solution.
\end{theorem}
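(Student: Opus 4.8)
The plan is to treat problem \eqref{1.1} variationally in the natural Sobolev space $S^1_0(\Omega)$ associated with the Kohn-Laplacian, working with the energy functional
\begin{equation*}
I_\mu(u)=\frac12\int_\Omega|\nabla_H u|^2\,d\xi-\frac{\mu}{q}\int_\Omega|u|^q\,d\xi-\frac{1}{2Q^\ast_\lambda}\int_\Omega\int_\Omega\frac{|u(\eta)|^{Q^\ast_\lambda}|u(\xi)|^{Q^\ast_\lambda}}{|\eta^{-1}\xi|^\lambda}\,d\eta\,d\xi,
\end{equation*}
whose critical points are the weak solutions. For $1<q<2$ the subcritical term is sublinear, so $I_\mu$ has the geometry of a functional that is bounded below and coercive near the origin: I would first show that for $\mu$ small there exist radii producing a strict local minimum at a small negative level together with a mountain-pass geometry beyond it. Because the sublinear term makes $I_\mu$ bounded below on a small ball, Ekeland's variational principle yields a first solution $u_1$ as a local minimizer with $I_\mu(u_1)<0$; replacing $u$ by $|u|$ and invoking the strong maximum principle for $\Delta_H$ gives positivity of $u_1$.

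To obtain the second positive solution I would apply the Mountain Pass Theorem around the local minimum $u_1$. The candidate critical level is
\begin{equation*}
c_\mu=\inf_{\gamma\in\Gamma}\max_{s\in[0,1]}I_\mu(\gamma(s)),
\end{equation*}
and the decisive point is to verify that $c_\mu$ lies below the threshold
\begin{equation*}
c^\ast=\Big(\frac12-\frac{1}{2Q^\ast_\lambda}\Big)S_{HL}^{\frac{2Q-\lambda}{2Q-\lambda-(Q-2)}},
\end{equation*}
where $S_{HL}$ is the best constant in the relevant Hardy--Littlewood--Sobolev--Sobolev inequality on $\mathbb{H}^N$. Below $c^\ast$ the concentration-compactness principle forbids loss of mass and restores compactness of Palais--Smale sequences, so the mountain-pass level is attained by a second positive solution $u_2$. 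To check $c_\mu<c^\ast$ I would insert the extremal (Jerison--Lee type) instanton functions $U_\varepsilon$, suitably truncated and translated into $\Omega$, as test functions and estimate the energy along the ray $t\mapsto u_1+tU_\varepsilon$; the sublinear term contributes a strictly negative lower-order correction that pushes the max-value below $c^\ast$ for $\varepsilon$ small.

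The main obstacle is precisely this energy estimate: on the Heisenberg group the extremal functions, their asymptotic decay, and the interaction integrals are more delicate than in the Euclidean case, and the doubly nonlocal Choquard term forces one to control cross terms $\int\int |u_1|^{Q^\ast_\lambda}|U_\varepsilon|^{Q^\ast_\lambda}|\eta^{-1}\xi|^{-\lambda}$ whose order in $\varepsilon$ must be weighed against the gain from the sublinear perturbation. I would handle this by recording the sharp asymptotic expansions of $\|U_\varepsilon\|^2$ and of the nonlocal numerator, and by choosing the parameter ranges so that the negative sublinear contribution dominates the positive remainder. Finally, for the ground state I would set $m=\inf\{I_\mu(u):u\neq0,\ I_\mu'(u)=0\}$, show using the two solutions already produced that $m<0$ is attained, and verify via the same concentration-compactness argument (now at a negative, hence subcritical, level) that a minimizing sequence of nontrivial critical points converges to a nontrivial critical point, which after replacing $u$ by $|u|$ and applying the maximum principle is the desired positive ground state.
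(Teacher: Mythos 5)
Your proposal follows essentially the same route as the paper: Ekeland's variational principle on a small ball for the first (negative-energy) local minimizer, a mountain-pass argument around it with the level pushed below the compactness threshold $\bigl(\tfrac12-\tfrac{1}{2Q_\lambda^\ast}\bigr)S_{HG}^{Q_\lambda^\ast/(Q_\lambda^\ast-1)}-D\mu^{2/(2-q)}$ via truncated Jerison--Lee extremals along the ray $u_1+s\upsilon_\gamma$, and a Nehari-type minimization for the ground state, all combined with the Heisenberg concentration-compactness principle and the maximum principle for positivity. The approach and the key estimates you identify coincide with the paper's Lemmas 3.1--3.6 and Subsection 3.3.
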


\begin{theorem}\label{the1.2} Let
$\Omega\subset \mathbb{H}^{N}$ be a
smooth bounded domain and
$2<q<Q_\lambda^\ast$. Then there exists
$\mu^{\ast}>0$ such that if
$\mu > \mu^{\ast}$, then problem \eqref{1.1}
has a nontrivial solution.
\end{theorem}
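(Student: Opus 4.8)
The plan is to realize the nontrivial solution as a mountain pass critical point of the energy functional
$$
I_\mu(u)=\frac{1}{2}\int_\Omega|\nabla_H u|^2\,d\xi-\frac{\mu}{q}\int_\Omega|u|^q\,d\xi-\frac{1}{2Q_\lambda^\ast}\int_\Omega\int_\Omega\frac{|u(\eta)|^{Q_\lambda^\ast}|u(\xi)|^{Q_\lambda^\ast}}{|\eta^{-1}\xi|^\lambda}\,d\eta\,d\xi,
$$
defined on the Folland--Stein Sobolev space $S_0^1(\Omega)$ with $\|u\|=\big(\int_\Omega|\nabla_H u|^2\,d\xi\big)^{1/2}$, whose critical points coincide with the weak solutions of \eqref{1.1}. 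First I would check the mountain pass geometry. Since $q>2$ and $2Q_\lambda^\ast>2$, both the local and the nonlocal terms are of higher order than the quadratic term near the origin; the Folland--Stein embedding together with the Hardy--Littlewood--Sobolev inequality gives $I_\mu(u)\ge\frac12\|u\|^2-C_1\mu\|u\|^q-C_2\|u\|^{2Q_\lambda^\ast}$, so there are $\rho,\alpha>0$ (depending on $\mu$) with $I_\mu(u)\ge\alpha$ whenever $\|u\|=\rho$. Moreover, for any fixed $v\neq0$ one has $I_\mu(sv)\to-\infty$ as $s\to+\infty$, which supplies the mountain beyond the ridge. Hence the minimax value $c_\mu=\inf_{\gamma\in\Gamma}\max_{s\in[0,1]}I_\mu(\gamma(s))$, taken over paths joining $0$ to a point of negative energy, is well defined and satisfies $c_\mu\ge\alpha>0$.

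The delicate point is compactness: because the Choquard nonlinearity carries the Hardy--Littlewood--Sobolev critical exponent, $I_\mu$ fails the Palais--Smale condition globally. The plan is to establish a \emph{local} Palais--Smale condition, namely that $(PS)_c$ holds for every level $c<c^\ast$, where
$$
c^\ast=\frac{Q-\lambda+2}{2(2Q-\lambda)}\,S_{HL}^{\frac{2Q-\lambda}{Q-\lambda+2}}
$$
and $S_{HL}$ is the best constant of the Hardy--Littlewood--Sobolev--Sobolev inequality on $\mathbb{H}^N$. For a $(PS)_c$ sequence $(u_n)$, boundedness follows by combining $I_\mu(u_n)$ and $\langle I_\mu'(u_n),u_n\rangle$ with weights $1$ and $-1/q$, whose resulting coefficients $\frac12-\frac1q$ and $\frac1q-\frac{1}{2Q_\lambda^\ast}$ are both positive since $2<q<2Q_\lambda^\ast$. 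Passing to a weak limit $u$, I would then invoke the concentration--compactness principle for the nonlocal critical term, combined with a Brezis--Lieb type splitting for the double integral, to show that no mass escapes into concentrated bubbles as long as $c<c^\ast$; this forces $u_n\to u$ strongly and hence $I_\mu'(u)=0$.

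The role of the \emph{high perturbation} is precisely to push the minimax level below this threshold. Fixing any $v\neq0$ and discarding the nonnegative critical term, one gets
$$
\max_{s\ge0}I_\mu(sv)\le\max_{s\ge0}\Big(\frac{s^2}{2}\|v\|^2-\frac{\mu s^q}{q}\int_\Omega|v|^q\,d\xi\Big)=C(v)\,\mu^{-\frac{2}{q-2}},
$$
so that $0<c_\mu\le C(v)\,\mu^{-2/(q-2)}\to0$ as $\mu\to+\infty$. Choosing $\mu^\ast>0$ so large that $C(v)(\mu^\ast)^{-2/(q-2)}<c^\ast$, we obtain $0<c_\mu<c^\ast$ for every $\mu>\mu^\ast$. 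The local Palais--Smale condition then applies at level $c_\mu$, the Mountain Pass Theorem produces a critical point $u$ with $I_\mu(u)=c_\mu\ge\alpha>0=I_\mu(0)$, and therefore $u\neq0$ is the desired nontrivial weak solution.

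I expect the main obstacle to be the proof of the local Palais--Smale condition: quantifying exactly how much energy a concentrating bubble of the nonlocal critical term carries, and identifying that quantum with $c^\ast$, is the technical heart of the argument. This demands the Heisenberg-group version of the concentration--compactness principle together with a Brezis--Lieb decomposition carefully adapted to the convolution kernel $|\eta^{-1}\xi|^{-\lambda}$, both considerably more involved than their local Euclidean counterparts; the remaining steps (geometry, boundedness, and the level estimate driven by large $\mu$) are comparatively routine.
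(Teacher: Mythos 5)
Your proposal is correct and follows essentially the same route as the paper's Section 4: mountain pass geometry for $I_\mu$, a local Palais--Smale condition below a one-bubble energy threshold obtained from the concentration--compactness principle on $\mathbb{H}^N$, and the observation that the minimax level along a fixed ray tends to $0$ as $\mu\to\infty$ (the paper argues this by showing $s_\mu\to 0$ via dominated convergence, while your explicit bound $C(v)\mu^{-2/(q-2)}$ is a sharper, more direct version of the same estimate). The only other difference is cosmetic: the paper uses the smaller threshold $(\tfrac12-\tfrac1q)S_{HG}^{Q_\lambda^\ast/(Q_\lambda^\ast-1)}$, obtained by pairing $I_\mu$ with $-\tfrac1q I_\mu'$ so that the $\mu$-dependent subcritical term drops out, whereas your constant $\tfrac12-\tfrac{1}{2Q_\lambda^\ast}$ requires the additional (standard) remark that the weak limit is a critical point with nonnegative energy; since any fixed positive threshold suffices once $c_\mu\to 0$, both choices work.
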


\begin{theorem}\label{the1.3}
Let
$\Omega\subset \mathbb{H}^{N}$
be a smooth bounded domain and
$2<q<Q_\lambda^\ast$. Then there exists
$\mu^{\ast\ast}>0$ such that
if
$\mu > \mu^{\ast\ast}$,
then problem \eqref{1.1} has at least
$n$
pairs of nontrivial weak solutions.
\end{theorem}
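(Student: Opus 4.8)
The plan is to exploit the $\mathbb{Z}_2$-symmetry of the energy functional of \eqref{1.1} together with a local Palais--Smale analysis, and to use the superlinearity $q>2$ to drive finitely many symmetric minimax levels below the compactness threshold once $\mu$ is large. Working in the Folland--Stein space $S_0^{1}(\Omega)$ with norm $\|u\|=\big(\int_\Omega|\nabla_H u|^2\,d\xi\big)^{1/2}$, I would study
\[
I_\mu(u)=\frac12\|u\|^2-\frac{\mu}{q}\int_\Omega|u|^q\,d\xi-\frac{1}{2Q_\lambda^\ast}\int_\Omega\int_\Omega\frac{|u(\eta)|^{Q_\lambda^\ast}|u(\xi)|^{Q_\lambda^\ast}}{|\eta^{-1}\xi|^{\lambda}}\,d\eta\,d\xi .
\]
This functional is even, of class $C^1$, and its nonzero critical points are exactly the nontrivial weak solutions of \eqref{1.1}; by evenness each nonzero critical point $u$ yields the pair $\pm u$. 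Thus it suffices to produce $n$ distinct strictly positive critical values.

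First I would record the local compactness. Using the Hardy--Littlewood--Sobolev inequality on $\mathbb{H}^N$ and the concentration--compactness principle for the critical Choquard term, one shows that $I_\mu$ satisfies $(PS)_c$ for every
\[
c<c^\ast:=\Big(\frac12-\frac{1}{2Q_\lambda^\ast}\Big)\,S_{H,L}^{\frac{2Q-\lambda}{Q+2-\lambda}},
\]
where $S_{H,L}$ is the best constant in the corresponding Sobolev--Hardy--Littlewood embedding: a $(PS)_c$ sequence at a level below $c^\ast$ cannot concentrate and is therefore relatively compact. The key feature is that $c^\ast>0$ is independent of $\mu$.

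Second, since $2<q<Q_\lambda^\ast$, the functional has the symmetric mountain-pass geometry: for each fixed $\mu>0$ the quadratic term dominates the superlinear and critical terms for small $\|u\|$, so there are $\rho_\mu,\alpha_\mu>0$ with $I_\mu(u)\ge\alpha_\mu>0$ whenever $\|u\|=\rho_\mu$. I would then invoke the symmetric mountain-pass scheme (Ambrosetti--Rabinowitz type, based on the Krasnoselskii genus and a local $(PS)$ condition), which associates to each $k$ a minimax value $c_k$ with $\alpha_\mu\le c_1\le c_2\le\cdots$, and with the property that whenever $c_k\in(0,c^\ast)$ it is a genuine critical value; moreover if several of these values coincide, the genus of the associated critical set is at least two, so \eqref{1.1} already possesses infinitely many solutions. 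The upper bounds for $c_1,\dots,c_n$ come from a fixed $n$-dimensional subspace $E_n=\mathrm{span}\{e_1,\dots,e_n\}$ of Dirichlet eigenfunctions of $-\Delta_H$: on $E_n$ all norms are equivalent, so discarding the nonnegative Choquard term gives $I_\mu(u)\le\frac12\|u\|^2-\frac{\mu C_n}{q}\|u\|^q$ for $u\in E_n$, and maximizing in $\|u\|$ (using $q>2$) yields $\sup_{E_n}I_\mu\le K_n\,\mu^{-2/(q-2)}\to0$ as $\mu\to\infty$.

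The decisive point is the interplay of these two estimates. I would fix $\mu^{\ast\ast}>0$ so that $K_n\,\mu^{-2/(q-2)}<c^\ast$ for all $\mu>\mu^{\ast\ast}$; then, comparing the minimax values with the $n$-dimensional cap $E_n$, one obtains $\alpha_\mu\le c_k\le\sup_{E_n}I_\mu<c^\ast$ for $k=1,\dots,n$. Hence $c_1,\dots,c_n$ are $n$ critical values lying in the compactness window $(0,c^\ast)$, each producing a pair $\pm u$, which gives the asserted $n$ pairs of nontrivial weak solutions. I expect the main obstacle to be exactly this last calibration: one must keep the levels strictly positive, so that the solutions are nontrivial, while simultaneously pushing all $n$ of them strictly below the $\mu$-independent threshold $c^\ast$, and it is precisely the competition between the superlinear growth $q>2$ and the large parameter $\mu$ that makes both requirements compatible.
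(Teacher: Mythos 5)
Your proposal follows essentially the same route as the paper: the symmetric, genus-based minimax scheme of Ambrosetti--Rabinowitz, a local $(PS)_c$ condition below a $\mu$-independent threshold obtained from the concentration--compactness principle on $\mathbb{H}^N$, and an upper bound for the first $n$ minimax levels via a finite-dimensional subspace on which $\sup I_\mu\to 0$ as $\mu\to\infty$, so that all $n$ levels sit in the compactness window for $\mu$ large. The only cosmetic difference is your threshold constant $\bigl(\tfrac12-\tfrac{1}{2Q_\lambda^\ast}\bigr)S_{HG}^{Q_\lambda^\ast/(Q_\lambda^\ast-1)}$ versus the paper's $\bigl(\tfrac12-\tfrac{1}{q}\bigr)S_{HG}^{Q_\lambda^\ast/(Q_\lambda^\ast-1)}$ (the latter is what falls out of testing with $I_\mu-\tfrac1q\langle I_\mu',u\rangle$, which is also the computation that yields boundedness of $(PS)$ sequences when $q>2$); since your finite-dimensional cap tends to $0$, either constant suffices.
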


The paper is organized as follows.
In Section 2, we collect
some notations and known facts,
and introduce some properties
of the Folland-Stein space
$\mathring{S}_1^2(\Omega)$.
Moreover,
a key estimate, i.e.,
 Lemma \ref{lem2.2}, is introduced.
 In Section 3, we make use of the
 variational methods to prove some
basic lemmas.
Then we demonstrate Theorem 1.1.
To be more specific,
in the first subsection,
Ekeland variational principle
is used to prove the existence of the
first positive solution, and in the
second subsection,
Mountain Pass Lemma is used to prove
the existence of the second positive
solution. Furthermore,
in the third subsection, we prove
that problem \eqref{1.1} has a positive
ground state solution.
In Section 4, we use the general Mountain
Pass Theorem to accomplish the
proof of Theorem 1.2.
Finally, in Section 5,
we prove Theorem 1.3 by using
Krasnoselskii's genus theory.

\section{Preliminaries}

In this section, we have collected
some known facts which will be
useful in the sequel (for others we refer to Papageorgiou et al. \cite{PRR}). Set
$Q=2N+2$ and
$Q^{\ast}=\frac{2Q}{Q-2}$.
Let
$\|u\|_p^{p}=\int_{\Omega}
|u|^pd\xi$ for all
$u\in
L^{p}(\Omega),$  represent the usual
$L^p$-norm. Following Folland
and Stein \cite{fo}, we define the space
$\mathring{S}_1^2(\Omega)$
as the closure of
$C_{0}^{\infty}(\Omega)$ in
$S_1^2(\mathbb{H}^{N})$. Then
$\mathring{S}_1^2(\Omega)$ is a
Hilbert space with respect to the norm
$$
\|u\|_{\mathring{S}_1^2(\Omega)}^2=
\int_{\Omega}|\nabla_{H}u|^2d\xi.
$$
For the sake of brevity,
we shall denote
$\|u\|=\|u\|_{\mathring{S}_1^2(\Omega)}^2$.
By
Folland
and Stein
 \cite{fo}, we know
that the Folland-Stein space is a
Hilbert space and the embedding
$\mathring{S}_1^2(\Omega)\hookrightarrow
L^p(\Omega)
$ for all
$p
\in [1, Q^{*})$ is compact. However,
it is only continuous if
$p =
Q^{*}$.

By Jerison and Lee \cite{jer},
we have the following Best Sobolev
constant
\begin{equation}\label{e2.3}
S=\inf\limits_{u\in
\mathring{S}_1^2(\Omega)}
\frac{ \int_{\Omega}|\nabla_{H}u|^2d\xi}
{ \big ( \int_{\Omega}
|u|^{Q^*}d\xi
 \big ) ^{\frac{2}{Q^*}}}.
\end{equation}

\begin{proposition}\label{pro2.1}
{\rm (see  Goel and Sreenadh \cite{go})} Let $r, s >1$ and
$0<\lambda<Q$ with $\frac{1}{r}+\frac{\lambda}{Q}+\frac{1}{s}=2$, $g
\in L^{r}(\Omega)$, and $d \in L^{s}(\Omega)$. There is a sharp
constant $C(t,r,\lambda,Q)$, independent of g, d, such that
\begin{equation}\label{2.5}
\int_{\Omega}\int_{\Omega}
\frac{g(\xi)d(\eta)}
{|\eta^{-1}\xi|^{\lambda}}d\eta d\xi\leq
C(t,r,\lambda,Q)|g|_{r}|d|_{s}.
\end{equation}
If
$r=s=\frac{2Q}{2Q-\lambda}$, then
$$
C(t,r,\lambda,Q)=C(\lambda,Q)=
\Big ( \frac{\pi^{N+1}}{2^{N-1}N!}
\Big ) ^{\lambda/Q}\frac{N!\Gamma((Q-\lambda)/2)}
{\Gamma^{2}((2Q-\lambda)/2)},
$$
where $\Gamma$ is the standard Gamma function.
\end{proposition}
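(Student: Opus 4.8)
The plan is to establish the inequality in two stages: first the boundedness with a finite (not necessarily optimal) constant throughout the admissible range $\frac{1}{r}+\frac{\lambda}{Q}+\frac{1}{s}=2$, and then the identification of the sharp constant in the conformally invariant diagonal case $r=s=\frac{2Q}{2Q-\lambda}$. The starting point for the first stage is to read the double integral as a convolution pairing on the homogeneous group $\mathbb{H}^N$. Setting $K(\xi)=|\xi|_H^{-\lambda}$ and extending $g,d$ by zero outside $\Omega$, the left-invariant group convolution yields
\begin{equation*}
\int_{\Omega}\int_{\Omega}\frac{g(\xi)d(\eta)}{|\eta^{-1}\xi|^{\lambda}}\,d\eta\,d\xi=\int_{\mathbb{H}^N}g(\xi)\,(d\ast K)(\xi)\,d\xi,\qquad (d\ast K)(\xi)=\int_{\mathbb{H}^N}d(\eta)\,K(\eta^{-1}\xi)\,d\eta,
\end{equation*}
so that the homogeneous dimension $Q=2N+2$ and the dilation rule $|B_H(0,r)|=\alpha_Q r^Q$ take over the role played by the Euclidean dimension in the classical Hardy--Littlewood--Sobolev argument.

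The decisive first step is to show that the kernel $K$ belongs to the weak Lebesgue space $L^{Q/\lambda,\infty}(\mathbb{H}^N)$. Since the Kor\'{a}nyi gauge is homogeneous of degree $1$ under the dilations $\delta_s$, its distribution function is $|\{\xi:|\xi|_H^{-\lambda}>t\}|=|B_H(0,t^{-1/\lambda})|=\alpha_Q t^{-Q/\lambda}$, so the weak-$L^{Q/\lambda}$ quasinorm of $K$ equals $\alpha_Q^{\lambda/Q}$ and is in particular finite. I would then apply H\"{o}lder's inequality to peel off $|g|_r$, leaving $|d\ast K|_{r'}$, and invoke the weak Young (O'Neil) inequality on the unimodular group $\mathbb{H}^N$, which gives $|d\ast K|_{r'}\le C\,\|K\|_{L^{Q/\lambda,\infty}}\,|d|_s$ exactly under the index relation $1+\frac{1}{r'}=\frac{1}{s}+\frac{\lambda}{Q}$, i.e.\ $\frac{1}{r}+\frac{\lambda}{Q}+\frac{1}{s}=2$. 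This is legitimate because the Haar measure coincides with the $(2N+1)$-dimensional Lebesgue measure and is left-invariant, so the fractional-integration theory of Folland and Stein \cite{fo} on stratified groups transfers verbatim; it produces the finite constant $C(t,r,\lambda,Q)$.

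The second stage --- the \emph{sharp} constant in the diagonal case --- is the genuine obstacle, and is the reason the statement is quoted in \cite{go} (ultimately tracing to the sharp Heisenberg inequalities of Frank and Lieb) rather than reproved here. In this case the functional is invariant under the full CR-conformal group of $\mathbb{H}^N$, and the usual route through symmetric decreasing rearrangement is unavailable, because rearrangement is incompatible with the non-commutative group structure. The plan would be to follow the rearrangement-free strategy: transport the problem through the Cayley transform to the CR sphere $S^{2N+1}$, where the conformal invariance is realized by a compact group action, use a competing-symmetries and center-of-mass argument to show that an optimizing sequence may be taken invariant under a maximal compact subgroup, classify the resulting optimizers, and evaluate the functional on them to obtain the explicit value
\begin{equation*}
C(\lambda,Q)=\Big(\frac{\pi^{N+1}}{2^{N-1}N!}\Big)^{\lambda/Q}\frac{N!\,\Gamma((Q-\lambda)/2)}{\Gamma^{2}((2Q-\lambda)/2)}.
\end{equation*}
The hard part throughout is the existence and classification of the extremizers without recourse to rearrangement; supplying this is precisely the content of the Frank--Lieb method and lies beyond the scope of the present paper, which is why Proposition \ref{pro2.1} is imported as a known tool.
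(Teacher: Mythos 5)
The paper gives no proof of Proposition \ref{pro2.1} at all: it is imported verbatim from Goel and Sreenadh \cite{go} (with the sharp diagonal constant ultimately resting on Frank and Lieb \cite{Fr}), so there is nothing in the text to compare against. Your two-stage plan is nonetheless correct and consistent with those sources: the weak-Young/O'Neil argument for the finite constant is sound (the distribution function of $|\xi|_H^{-\lambda}$ and the index bookkeeping $1+\tfrac{1}{r'}=\tfrac{1}{s}+\tfrac{\lambda}{Q}$ both check out, and unimodularity of $\mathbb{H}^N$ justifies the convolution inequality), and you rightly recognize that the explicit sharp value in the case $r=s=\tfrac{2Q}{2Q-\lambda}$ cannot be obtained this way and must be taken from the Frank--Lieb rearrangement-free analysis. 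The only caveat worth recording is that the first stage yields boundedness, not sharpness, for general off-diagonal $r,s$ --- which is in fact all that is known and all the paper ever uses.
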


From Goel and Sreenadh \cite{go},
we get
$$
\int_{\Omega}\int_{\Omega}
\frac{|u(\xi)|^{Q_{\lambda}^{\ast}}
|u(\eta)|^{Q_{\lambda}^{\ast}}}{|\eta^{-1}\xi
|^{\lambda}}d\eta d\xi\leq
C(\lambda,Q)|u|_{Q^{\ast}}^{2Q_{\lambda}^{\ast}}
$$
and the best constant
$S_{HG}$ is defined by
\begin{equation}
\label{2.7}
S_{HG}=\inf\limits_{u\in
\mathring{S}_1^2(\Omega)
\setminus\{0\}}
\frac{\int_{\Omega}|\nabla_{H}u|^2d\xi}
{ \big (\int_{\Omega}
\int_{\Omega}\frac{|u(\xi)|^{Q_{\lambda}^{\ast}}
|u(\eta)|^{Q_{\lambda}^{\ast}}}
{|\eta^{-1}\xi|^{\lambda}}d\eta
d\xi
\big ) ^{\frac{1}{Q_{\lambda}^{\ast}}}}.
\end{equation}

\begin{lemma}\label{lem2.1}
{\rm (see  Goel and Sreenadh \cite{go})} We obtain the best constant
$S_{HG}$ if and only if
$$
u(\xi) = u(x,y,t) = CZ(\delta_{\theta}(a^{-1}\xi)),
$$
where
$C>0$ is a fixed constant,
$\theta\in(0, \infty)$ are
parameters,
$a\in \mathbb{H}^{N}$ and Z is defined in
Goel and  Sreenadh
\cite[(1.6)]{go}. Furthermore,
$$
S_{HG} = S(C(Q,\lambda))^{\frac{-1}
{Q_{\lambda}^{\ast}}},
$$
where S is the best constant
defined in 
Goel and  Sreenadh
\cite[(1.5)]{go}.
\end{lemma}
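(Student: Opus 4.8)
The plan is to relate the Choquard-type Rayleigh quotient defining $S_{HG}$ in \eqref{2.7} to the Sobolev quotient defining $S$ in \eqref{e2.3} through the sharp Hardy--Littlewood--Sobolev inequality of Proposition \ref{pro2.1}, and then to use the rigidity of its equality case to pin down the extremals. Everything hinges on the fact that the sharp HLS extremals and the Jerison--Lee Sobolev extremals form the \emph{same} family $Z$ (up to the symmetry group), so I organize the argument as two one-sided inequalities whose equality cases match.

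First I would establish the easy direction $S_{HG}\ge S\,\big(C(Q,\lambda)\big)^{-1/Q_{\lambda}^{\ast}}$. For any $u\in\mathring{S}_1^2(\Omega)\setminus\{0\}$ I apply Proposition \ref{pro2.1} with $g=d=|u|^{Q_{\lambda}^{\ast}}$ and $r=s=\frac{2Q}{2Q-\lambda}$; the exponent bookkeeping $Q_{\lambda}^{\ast}\cdot r=Q^{\ast}$ gives $|g|_{r}=|u|_{Q^{\ast}}^{Q_{\lambda}^{\ast}}$, hence
$$
\Big(\int_{\Omega}\int_{\Omega}\frac{|u(\xi)|^{Q_{\lambda}^{\ast}}|u(\eta)|^{Q_{\lambda}^{\ast}}}{|\eta^{-1}\xi|^{\lambda}}\,d\eta\,d\xi\Big)^{1/Q_{\lambda}^{\ast}}\le \big(C(Q,\lambda)\big)^{1/Q_{\lambda}^{\ast}}\,|u|_{Q^{\ast}}^{2}.
$$
Dividing $\int_{\Omega}|\nabla_{H}u|^2\,d\xi$ by both sides and taking the infimum over $u$ yields $S_{HG}\ge \big(C(Q,\lambda)\big)^{-1/Q_{\lambda}^{\ast}}\,S$ by \eqref{e2.3}.

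For the reverse inequality and the characterization I would invoke the sharp form of the Heisenberg HLS inequality together with the classification of its extremals. By the Frank--Lieb theorem on $\mathbb{H}^N$, equality in Proposition \ref{pro2.1} with $r=s=\frac{2Q}{2Q-\lambda}$ is attained exactly at the functions $u=C\,Z(\delta_{\theta}(a^{-1}\xi))$, and these coincide with the Jerison--Lee extremals \cite{jer} realizing $S$ in \eqref{e2.3}. Substituting such a $Z$ into the quotient \eqref{2.7} and using the HLS equality $\int_{\Omega}\int_{\Omega}\frac{|Z(\xi)|^{Q_{\lambda}^{\ast}}|Z(\eta)|^{Q_{\lambda}^{\ast}}}{|\eta^{-1}\xi|^{\lambda}}\,d\eta\,d\xi=C(Q,\lambda)\,|Z|_{Q^{\ast}}^{2Q_{\lambda}^{\ast}}$ gives $S_{HG}\le \big(C(Q,\lambda)\big)^{-1/Q_{\lambda}^{\ast}}\,S$, which combined with the previous step produces the formula $S_{HG}=S\,\big(C(Q,\lambda)\big)^{-1/Q_{\lambda}^{\ast}}$. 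The ``only if'' direction then follows by tracing equalities backward: a minimizer of $S_{HG}$ forces equality in the HLS step, and rigidity of the sharp HLS inequality forces $u$ to have the stated form; conversely, the invariance of both quotients under the left translations $\tau_{a}$, the dilations $\delta_{\theta}$, and scaling by $C>0$ shows that every such $Z$ is indeed a minimizer.

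The main obstacle is not the exponent algebra or the assembly of the two inequalities, which are routine, but the rigidity input: one must know both that the sharp HLS extremals on the Heisenberg group are, up to the symmetry group, uniquely the functions $Z$, and that these are the same as the Jerison--Lee Sobolev extremals. This identification rests on the Frank--Lieb sharp HLS inequality on $\mathbb{H}^N$ and on the Jerison--Lee determination of the CR Yamabe extremals \cite{jer}, and it is precisely this matching of the two extremal families that upgrades the one-sided bound into the equality and delivers the full ``if and only if'' statement, as carried out by Goel and Sreenadh \cite{go}.
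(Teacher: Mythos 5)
The paper does not prove this lemma at all: it is quoted verbatim from Goel and Sreenadh \cite{go}, so there is no in-paper argument to compare against. Your proposal is the standard and correct route to the result and is essentially the one used in the cited source (following the Euclidean template): the easy bound $S_{HG}\ge S\,C(Q,\lambda)^{-1/Q_{\lambda}^{\ast}}$ via Proposition \ref{pro2.1}, the reverse bound by testing with $Z$, and the characterization via the Frank--Lieb rigidity of the sharp HLS extremals together with the observation that raising them to the power $1/Q_{\lambda}^{\ast}$ reproduces the Jerison--Lee Sobolev extremals. The only caveat worth flagging is that, as stated, the infimum in \eqref{2.7} is taken over $\mathring{S}_1^2(\Omega)$ for a bounded $\Omega$, where the extremals $Z(\delta_{\theta}(a^{-1}\xi))$ do not live and the translation/dilation invariance you invoke fails; the statement should be read on all of $\mathbb{H}^N$ (equivalently, one notes the constant is domain-independent), which is how the cited reference treats it.
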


On the other hand, from the proof of
Goel and  Sreenadh
\cite[Lemma 2.1]{go},
we know that a unique minimizer of
$S_{HG}$ is the function
$$
P(\eta)=S^{\frac{(Q-\lambda)(2-Q)}{4(Q-\lambda+2)}}
C(Q,\lambda)^{\frac{2-Q}{2(Q-\lambda+2)}}Z(\eta)
$$
and it satisfies
the following:
\begin{equation*}
-\Delta_{H}u=\Big(\int_{\mathbb{H}^N}
\frac{|u(\eta)|^{Q_\lambda^\ast}}
{|\eta^{-1}\xi|^\lambda}{d\eta}\Big)
|u|^{Q_\lambda^\ast-2}u  \ \
\mbox{in} \ \ \mathbb{H}^N
\end{equation*}
and
$$
\int_{\mathbb{H}^N}|\nabla_{H} P|^2d\xi
=\int_{\mathbb{H}^N}\int_{\mathbb{H}^N}
\frac{|P(\xi)|^{Q_\lambda^\ast}
|P(\eta)|^{Q_\lambda^\ast}}
{|\eta^{-1}\xi|^\lambda}d\eta
d\xi =S_{HG}^{\frac{2Q-\lambda}{Q-\lambda+2}}.
$$
Furthermore, for
$\gamma>0$, the function
$P_{\gamma}$ is defined as follows
$$
P_{\gamma}=
\frac{\gamma^{\frac{Q-2}{2}}
S^{\frac{(Q-\lambda)(2-Q)}
{4(Q-2+\lambda)}}C(\lambda,Q)^{\frac{2-Q}
{2(Q-\lambda+2)}}C}{(\gamma^{4}t^{2}+(1+
\gamma^{2}|x|^{2}+\gamma^{2}
|y|^{2})^{2})^{(Q-2)/4}},
$$
and satisfied
\begin{align*}
\int_{\Omega}|\nabla_H P_{\gamma}|^2d\xi
=\int_{\Omega}\int_{\Omega}
\frac{|P_{\gamma}(\xi)|^{Q_{\lambda}^{\ast}}
|P_{\gamma}(\eta)|^{Q_{\lambda}^{\ast}}}
{|\eta^{-1}\xi|^{\lambda}}d\eta
d\xi=S_{HG}^{\frac{2Q-\lambda}{Q-\lambda+2}}
\end{align*}
and
\begin{align*}
\int_{\Omega}|P_{\gamma}|^{Q^{\ast}}d\xi
=
S^{\frac{Q}{Q-\lambda+2}}
C(\lambda,Q)^{\frac{-Q}{Q-\lambda+2}}.
\end{align*}
More generally, we can suppose that
$0\in\Omega$ and that there is
$r>0$ such that
$B(0,4r)\subset\Omega\subset B(0,kr)$
for some
$k>0$.
Choose
$\nu\in C_{c}^{\infty}(\Omega)$
such that
$0\leq\nu\leq1$,
$|\nabla_{H}\nu|$ is bounded, and
\begin{equation}
\nu(\eta)=
\left\{
\begin{aligned}\label{2.8}
1,&\quad \hbox {if}\quad \eta\in B(0,r),
\\
0,&\quad \hbox {if}\quad \eta\in
\Omega\backslash B(0,2r).
\end{aligned}
\right.
\end{equation}
Then for the following function
\begin{equation}\label{2.9}
\upsilon_{\gamma}=\nu P_{\gamma}\in
 \mathring{S}_1^2(\Omega),
\end{equation}
we have asymptotic estimates as follows.

\begin{lemma}\label{lem2.2}
{\rm (see Goel and Sreenadh \cite{go})} Let $0<\lambda<Q$. Then the
following holds:
\begin{itemize}
\item[($i$)]
\begin{equation*}
\int_{\Omega}|\upsilon_{\gamma}|^{2}d\xi\geq C
\left\{
\begin{aligned}
\gamma^{-2}+O(\gamma^{-Q+2}),&\quad  Q>4,\\
\gamma^{-2}\log\gamma+O(\gamma^{-2}),&\quad Q=4.\\
\end{aligned}
\right.
\end{equation*}
\item[($ii$)]
\begin{align*}
\int_{\Omega}|\upsilon_{\gamma}|^{Q^{\ast}}
d\xi=S^{\frac{Q}{Q-\lambda+2}}
C(\lambda,Q)^{\frac{-Q}
{Q-\lambda+2}}+O(\gamma^{-Q}).
\end{align*}
\item[($iii$)]
\begin{align*}
\int_{\Omega}\int_{\Omega}
\frac{|\upsilon_{\gamma}
(\xi)|^{Q_{\lambda}^{\ast}}
|\upsilon_{\gamma}(\eta)
|^{Q_{\lambda}^{\ast}}}{|\eta^{-1}
\xi|^{\lambda}}d\eta d\xi
\leq
S_{HG}^{\frac{2Q-\lambda}
{Q-\lambda+2}}+O(\gamma^{-Q}).
\end{align*}
\item[($iv$)]
\begin{align*}
\int_{\Omega}\int_{\Omega}
\frac{|\upsilon_{\gamma}
(\xi)|^{Q_{\lambda}^{\ast}}
|\upsilon_{\gamma}
(\eta)|^{Q_{\lambda}^{\ast}}}
{|\eta^{-1}\xi|^{\lambda}}d\eta
d\xi\geq S_{HG}^{\frac{2Q-\lambda}
{Q-\lambda+2}}-
O(\gamma^{-\frac{2Q-\lambda}{2}}).
\end{align*}
\item[($v$)]
\begin{align*}
\int_{\Omega}|\nabla_{H}
\upsilon_{\gamma}|^{2}d\xi\leq
S_{HG}^{\frac{2Q-\lambda}
{Q-\lambda+2}}+
O(\gamma^{-\min\{\frac{2Q-\lambda}{2},Q-2\}}).
\end{align*}
\end{itemize}
\end{lemma}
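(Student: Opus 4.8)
The plan is to reduce everything to two structural facts about the extremal profile and then control all remainders by a single family of tail integrals. First, $P_\gamma$ is dilation-covariant: from the explicit formula one checks that $P_\gamma(\eta)=\gamma^{(Q-2)/2}P(\delta_\gamma\eta)$ up to the fixed normalizing constants, and that the full-space quantities $\int_{\mathbb{H}^N}|\nabla_H P_\gamma|^2$, $\int_{\mathbb{H}^N}|P_\gamma|^{Q^\ast}$ and the Choquard double integral are independent of $\gamma$ under the change of variables $\zeta=\delta_\gamma\eta$ (with $d\zeta=\gamma^{Q}d\eta$); these are exactly the scale-invariant values $S_{HG}^{(2Q-\lambda)/(Q-\lambda+2)}$ and $S^{Q/(Q-\lambda+2)}C(\lambda,Q)^{-Q/(Q-\lambda+2)}$ recorded just before the lemma. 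Second, the profile decays like $P(\zeta)\asymp|\zeta|_H^{-(Q-2)}$ as $|\zeta|_H\to\infty$, inherited from $Z$. Since $\upsilon_\gamma=\nu P_\gamma$ differs from $P_\gamma$ only on $\{\nu<1\}\subset\mathbb{H}^N\setminus B(0,r)$ (and $\Omega\supset B(0,4r)$, so replacing $\mathbb{H}^N$ by $\Omega$ only costs another $O(\gamma^{-Q})$ tail), every item will follow by adding and subtracting the scale-invariant full-space value and estimating the remainder.

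The master computation I would isolate is the following: for an exponent $p$ with $p(Q-2)>Q$, the substitution $\eta=\delta_{1/\gamma}\zeta$ gives $\int_{|\eta|_H>r}P_\gamma^{p}\,d\eta=\gamma^{p(Q-2)/2-Q}\int_{|\zeta|_H>\gamma r}P^{p}\,d\zeta\asymp\gamma^{-p(Q-2)/2}$, using the decay of $P$. Taking $p=Q^\ast=2Q/(Q-2)$ gives the $O(\gamma^{-Q})$ remainder in $(ii)$, and $(iii)$ then follows immediately from $0\le\nu\le1$. For $(i)$ the exponent is the subcritical value $p=2$, so the leading contribution comes instead from the ball: $\int_{B(0,r)}P_\gamma^{2}=\gamma^{-2}\int_{|\zeta|_H<\gamma r}P^{2}\,d\zeta$, and the borderline integrability $2(Q-2)\gtrless Q$ produces precisely the dichotomy, namely $C\gamma^{-2}+O(\gamma^{-(Q-2)})$ for $Q>4$ (convergent integral) versus $C\gamma^{-2}\log\gamma$ for $Q=4$ (logarithmic divergence).

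For the double-integral and gradient estimates I would exploit the Hardy--Littlewood--Sobolev inequality and the equation solved by $P_\gamma$. For $(iv)$, write $\upsilon_\gamma^{Q_\lambda^\ast}=P_\gamma^{Q_\lambda^\ast}-(1-\nu^{Q_\lambda^\ast})P_\gamma^{Q_\lambda^\ast}$ and expand; by Proposition \ref{pro2.1} each error term is bounded by $\big|(1-\nu^{Q_\lambda^\ast})P_\gamma^{Q_\lambda^\ast}\big|_{2Q/(2Q-\lambda)}\,\big|P_\gamma^{Q_\lambda^\ast}\big|_{2Q/(2Q-\lambda)}$, and since $Q_\lambda^\ast\cdot\tfrac{2Q}{2Q-\lambda}=Q^\ast$, the first factor equals $\big(\int(1-\nu^{Q_\lambda^\ast})^{2Q/(2Q-\lambda)}P_\gamma^{Q^\ast}\big)^{(2Q-\lambda)/(2Q)}=O\big(\gamma^{-(2Q-\lambda)/2}\big)$ by the master tail estimate, which yields the stated lower bound. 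For $(v)$, the cleanest route is to test the identity $-\Delta_H P_\gamma=\big(\int|P_\gamma(\eta)|^{Q_\lambda^\ast}|\eta^{-1}\xi|^{-\lambda}d\eta\big)|P_\gamma|^{Q_\lambda^\ast-2}P_\gamma$ against $\nu^2 P_\gamma$, giving $\int\nu^2|\nabla_H P_\gamma|^2=\int\nu^2 P_\gamma(\text{Choquard})-2\int\nu P_\gamma^2\,\nabla_H\nu\cdot\nabla_H P_\gamma$; expanding $|\nabla_H(\nu P_\gamma)|^2$ then collapses the gradient energy to a Choquard term with error $O(\gamma^{-(2Q-\lambda)/2})$ (controlled exactly as in $(iv)$) plus the cutoff term $\int P_\gamma^2|\nabla_H\nu|^2$, supported in the annulus $B(0,2r)\setminus B(0,r)$ and of size $O(\gamma^{-(Q-2)})$. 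The two competing rates combine into $O\big(\gamma^{-\min\{(2Q-\lambda)/2,\,Q-2\}}\big)$.

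The main obstacle I anticipate is the bookkeeping in $(v)$: one must check that the cross terms generated by the cutoff recombine favorably so that no remainder worse than the two recorded rates survives, and that the Hardy--Littlewood--Sobolev tail bound genuinely controls the gap between $\int\nu^2 P_\gamma(\text{Choquard})$ and the full value $S_{HG}^{(2Q-\lambda)/(Q-\lambda+2)}$. The very appearance of $\min\{(2Q-\lambda)/2,\,Q-2\}$ encodes the competition between the HLS tail (exponent $(2Q-\lambda)/2$) and the local cutoff energy (exponent $Q-2$), and turning the order estimates into the sharp borderline behavior in $(i)$ is where the explicit form of $Z$ and the careful treatment of $Q=4$ are essential.
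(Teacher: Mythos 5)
The paper does not prove this lemma at all: it is imported verbatim from Goel and Sreenadh \cite{go}, so there is no in-paper argument to compare against. Your reconstruction is sound and follows essentially the same route as the cited source -- the standard Brezis--Nirenberg-type computation built on the dilation covariance $P_\gamma(\eta)=\gamma^{(Q-2)/2}P(\delta_\gamma\eta)$, the decay $P(\zeta)\asymp|\zeta|_H^{-(Q-2)}$, the tail estimate $\int_{|\eta|_H>r}P_\gamma^{p}\,d\eta\asymp\gamma^{-p(Q-2)/2}$, the Hardy--Littlewood--Sobolev control of the cross terms in $(iv)$ (where the exponent identity $Q_\lambda^\ast\cdot\tfrac{2Q}{2Q-\lambda}=Q^\ast$ is exactly the right observation), and testing the equation for $P_\gamma$ against $\nu^2P_\gamma$ for $(v)$. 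The only caveats are cosmetic: in $(v)$ your argument actually yields the single rate $O(\gamma^{-(Q-2)})$, which is consistent with (and for $\lambda>4$ stronger than) the stated $\min$, and the sharp constants in $(i)$--$(ii)$ still require the explicit form of $Z$, as you note.
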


\section{Low perturbations of problem \eqref{1.1}}

We say that
$u\in \mathring{S}_1^2(\Omega)$
is a solution of problem
\eqref{1.1} if
$$
\int_{\Omega}\nabla_H u\nabla_H vd\xi
-\mu\int_{\Omega}
|u|^{q-2}uvd\xi-
\int_{\Omega}
\int_{\Omega}
\frac{|u(\eta)|^{Q_{\lambda}^{\ast}}}
{|\eta^{-1}\xi|^{\lambda}}
|u|^{Q_{\lambda}^{\ast}-2}uvd\eta d\xi=0
$$
for any
$v\in \mathring{S}_1^2(\Omega)$.
Furthermore, if
$u>0$, then
we call
$u\in \mathring{S}_1^2(\Omega)$
a positive solution to
problem \eqref{1.1}.
 In order to prove our results,
 it is necessary to
define the energy functional
$I_{\mu}: \mathring{S}_1^2(\Omega)
\rightarrow \mathbb{R}$ related
to problem \eqref{1.1}:
\begin{equation}\label{5.1}
I_{\mu}(u)=\frac{1}{2}
\int_{\Omega}|\nabla_H u|^{2}d\xi
-\frac{\mu}{q}\int_{\Omega}|u|^{q}d\xi -
\frac{1}{2Q_{\lambda}^{\ast}}
\int_{\Omega}\int_{\Omega}
\frac{|u(\xi)|^{Q_{\lambda}^{\ast}}
|u(\eta)|^{Q_{\lambda}^{\ast}}}
{|\eta^{-1}\xi|^{\lambda}}d\eta
d\xi.
\end{equation}
Then
$I_{\mu}$ is
$C^{1}$ on
$\mathring{S}_1^2(\Omega)$ and its
critical points are solutions of
problem \eqref{1.1}. Indeed,
let
$I_{\mu}'(u)$ denote the derivative of
$I_{\mu}$ at
$u$, that
is, for any
$u\in \mathring{S}_1^2(\Omega)$,
\begin{align*}
\langle I_{\mu}'(u), v\rangle
& =  \int_{\Omega}\nabla_H u\nabla_H vd\xi
-\mu\int_{\Omega}|u|^{q-2}uvd\xi\\
&
-\int_{\Omega}\int_{\Omega}
\frac{|u(\xi)|^{Q_{\lambda}^{\ast}}
|u(\eta)|^{Q_{\lambda}^{\ast}-2}
u(\eta)v(\eta)}{|\eta^{-1}\xi|^{\lambda}}
d\eta d\xi.
\end{align*}
Then $I_{\mu}'(u)$ continuously maps $\mathring{S}_1^2(\Omega)$ in
the dual space of $\mathring{S}_1^2(\Omega)$, which can be shown by
standard calculations. Therefore, we conclude that $u$ is a solution
of problem \eqref{1.1} if and only if $I_{\mu}$ is $C^{1}$ on
$\mathring{S}_1^2(\Omega)$ and $I_{\mu}'(u)=0.$

\subsection {The existence of a positive
solution of problem \eqref{1.1}}

\begin{lemma}\label{lem3.1}
Let
$1<q<2$. Then for all
\begin{equation}\label{3.1}
c<
\Big ( \frac{1}{2}-\frac{1}{2Q_{\lambda}^{\ast}}
\Big ) S_{HG}^{\frac{Q_{\lambda}^{\ast}}
{Q_{\lambda}^{\ast}-1}}-D\mu^{\frac{2}{2-q}},
\end{equation}
where
$$D=
\Big ( \frac{2Q_{\lambda}^{\ast}-q}
{2Q_{\lambda}^{\ast}q}
|\Omega|^{\frac{Q_{\lambda}^{\ast}-q}
{Q_{\lambda}^{\ast}}}S^{-\frac{q}{2}}
\Big ) ^{\frac{2}{2-q}}
\Big ( \frac{qQ_{\lambda}^{\ast}}
{Q_{\lambda}^{\ast}-1}
\Big ) ^{\frac{q}{2-q}},$$
$I_{\mu}$ satisfies the
$(PS)_{c}$
condition.
\end{lemma}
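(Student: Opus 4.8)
The plan is to verify the Palais--Smale condition at level $c$ by a standard three-stage argument: first show that any $(PS)_c$ sequence is bounded, then extract a weakly convergent subsequence and use the concentration-compactness framework (via the constant $S_{HG}$ in \eqref{2.7}) to rule out energy loss at the critical exponent, and finally upgrade weak convergence to strong convergence in $\mathring{S}_1^2(\Omega)$. Let $(u_k)\subset\mathring{S}_1^2(\Omega)$ be a sequence with $I_\mu(u_k)\to c$ and $I_\mu'(u_k)\to 0$ in the dual space.

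For boundedness, I would compute the standard combination $I_\mu(u_k)-\frac{1}{2Q_\lambda^\ast}\langle I_\mu'(u_k),u_k\rangle$. Because the nonlocal term has homogeneity $2Q_\lambda^\ast$, it cancels exactly in this combination, leaving
\begin{equation*}
I_\mu(u_k)-\frac{1}{2Q_\lambda^\ast}\langle I_\mu'(u_k),u_k\rangle
=\Big(\frac{1}{2}-\frac{1}{2Q_\lambda^\ast}\Big)\|u_k\|^2
-\mu\Big(\frac{1}{q}-\frac{1}{2Q_\lambda^\ast}\Big)\int_\Omega|u_k|^q\,d\xi.
\end{equation*}
Since $1<q<2<2Q_\lambda^\ast$, the coefficient of $\|u_k\|^2$ is strictly positive, while the $L^q$ term is controlled by $\|u_k\|^q$ through the continuous embedding $\mathring{S}_1^2(\Omega)\hookrightarrow L^q(\Omega)$ together with the Sobolev constant $S$ from \eqref{e2.3} and the finite measure $|\Omega|$; as $q<2$, a Young-type inequality absorbs this lower-order term and yields a uniform bound on $\|u_k\|$. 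Passing to a subsequence, $u_k\rightharpoonup u$ weakly in $\mathring{S}_1^2(\Omega)$, $u_k\to u$ strongly in $L^q(\Omega)$ by the compact embedding, and $u_k\to u$ a.e.

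The main obstacle is the critical nonlocal term: weak convergence alone does not pass to the limit in $\int\!\int\frac{|u_k(\xi)|^{Q_\lambda^\ast}|u_k(\eta)|^{Q_\lambda^\ast}}{|\eta^{-1}\xi|^\lambda}$, and one must prevent concentration from producing a positive defect. Writing $w_k=u_k-u$, I would apply a Brezis--Lieb type splitting for both the Dirichlet energy and the Hardy--Littlewood--Sobolev functional (using Proposition \ref{pro2.1}), so that $\|u_k\|^2=\|u\|^2+\|w_k\|^2+o(1)$ and the nonlocal term splits correspondingly. Combining $\langle I_\mu'(u_k),u_k\rangle\to 0$ with $I_\mu'(u)=0$ shows that $\|w_k\|^2$ and the critical nonlocal part of $w_k$ tend to a common limit $\ell\ge 0$. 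The definition of $S_{HG}$ forces $\ell\ge S_{HG}^{Q_\lambda^\ast/(Q_\lambda^\ast-1)}\cdot(\text{something})$ unless $\ell=0$; more precisely, the inequality $\|w_k\|^2\ge S_{HG}\big(\text{nonlocal}(w_k)\big)^{1/Q_\lambda^\ast}$ gives $\ell\ge S_{HG}\,\ell^{1/Q_\lambda^\ast}$, so either $\ell=0$ or $\ell\ge S_{HG}^{Q_\lambda^\ast/(Q_\lambda^\ast-1)}$.

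To exclude the second alternative I would insert the threshold hypothesis \eqref{3.1}. Using $I_\mu(u)\ge -D\mu^{2/(2-q)}$ (which follows by minimizing the lower bound for $I_\mu$ over $\|u\|$, with $D$ chosen precisely as in the statement via the Sobolev and measure estimates) and the energy splitting, one gets
\begin{equation*}
c=\lim I_\mu(u_k)=I_\mu(u)+\Big(\frac{1}{2}-\frac{1}{2Q_\lambda^\ast}\Big)\ell
\ge -D\mu^{\frac{2}{2-q}}+\Big(\frac{1}{2}-\frac{1}{2Q_\lambda^\ast}\Big)\ell.
\end{equation*}
If $\ell\ge S_{HG}^{Q_\lambda^\ast/(Q_\lambda^\ast-1)}$, this contradicts the assumed strict bound $c<\big(\frac12-\frac1{2Q_\lambda^\ast}\big)S_{HG}^{Q_\lambda^\ast/(Q_\lambda^\ast-1)}-D\mu^{2/(2-q)}$. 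Hence $\ell=0$, which means $\|w_k\|\to 0$, i.e. $u_k\to u$ strongly in $\mathring{S}_1^2(\Omega)$, establishing the $(PS)_c$ condition. I expect the delicate points to be the correct bookkeeping of the constant $D$ and verifying the Brezis--Lieb lemma for the Hardy--Littlewood--Sobolev double integral in the Heisenberg setting.
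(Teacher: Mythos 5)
Your proposal is correct in substance and reaches the same threshold, but it takes a genuinely different route from the paper. The paper invokes the Lions-type concentration--compactness principle on the Heisenberg group (Sun et al., Theorem 3.1): it represents the possible loss of compactness by atomic measures $\sum_j \mu_j\delta_{\xi_j}$, $\sum_j \nu_j\delta_{\xi_j}$, localizes with the cut-off functions $\psi_{\varepsilon,j}$ to show $\nu_j\geq\mu_j$ and hence $\mu_j\geq S_{HG}^{Q_{\lambda}^{\ast}/(Q_{\lambda}^{\ast}-1)}$ for any atom, and then contradicts the level bound exactly as you do, via minimizing $f(t)=\bigl(\tfrac12-\tfrac1{2Q_{\lambda}^{\ast}}\bigr)t^2-\mu\bigl(\tfrac1q-\tfrac1{2Q_{\lambda}^{\ast}}\bigr)S^{-q/2}|\Omega|^{(Q_{\lambda}^{\ast}-q)/Q_{\lambda}^{\ast}}t^q$ to produce the $-D\mu^{2/(2-q)}$ term. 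You instead run the Br\'ezis--Nirenberg style global splitting $u_k=u+w_k$, obtaining $\|w_k\|^2-\mathrm{NL}(w_k)\to 0$ and the dichotomy $\ell=0$ or $\ell\geq S_{HG}^{Q_{\lambda}^{\ast}/(Q_{\lambda}^{\ast}-1)}$ from the definition of $S_{HG}$; the energy identity $c=I_{\mu}(u)+\bigl(\tfrac12-\tfrac1{2Q_{\lambda}^{\ast}}\bigr)\ell$ together with $I_{\mu}(u)\geq -D\mu^{2/(2-q)}$ (which is the same minimization of $f$, applied to the critical point $u$) then closes the argument. The trade-off: your route dispenses with the measure-theoretic machinery and the cut-off estimates \eqref{3.6}--\eqref{3.11}, but it leans on two facts you assert rather than prove --- that the weak limit satisfies $I_{\mu}'(u)=0$ (standard, but it requires passing to the limit in the critical nonlocal term against fixed test functions), and a Br\'ezis--Lieb lemma for the Hardy--Littlewood--Sobolev double integral on $\mathbb{H}^N$, which is not stated anywhere in the paper and would have to be supplied (it does follow from the classical Br\'ezis--Lieb lemma applied to $|u_k|^{Q_{\lambda}^{\ast}}$ in $L^{2Q/(2Q-\lambda)}(\Omega)$ combined with Proposition \ref{pro2.1}, so this is a fillable gap, and you correctly flag it as the delicate point). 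The paper's route avoids this by citing an already-established concentration--compactness theorem. On a bounded domain the two mechanisms control exactly the same defect, so the resulting threshold is identical.
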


\begin{proof}[\bf Proof.]
Suppose that
$\{u_{n}\}\subset \mathring{S}_1^2(\Omega)$
satisfies
\begin{equation}\label{3.2}
I_{\mu}(u_{n})\rightarrow c,\ \
 I_{\mu}'(u_{n})\rightarrow 0
\quad as \quad n\rightarrow\infty,
\end{equation}
where
$c$ is taken from \eqref{3.1}.
It follows from the Young
inequality that
\begin{align}\label{3.3}
1+c+o(\|u_{n}\|)
&\geq I_{\mu}(u_{n})-
\frac{1}
{2Q_{\lambda}^{\ast}}I_{\mu}'(u_{n})u_{n}\\
&=\nonumber
\Big ( \frac{1}{2}-\frac{1}{2Q_{\lambda}^{\ast}}
\Big ) \|u_{n}\|^{2}-\mu
\Big ( \frac{1}{q}-\frac{1}{2Q_{\lambda}^{\ast}}
\Big ) \int_{\Omega}|u_{n}|^{q}d\xi\\
&\geq\nonumber
\Big ( \frac{1}{2}-\frac{1}{2Q_{\lambda}^{\ast}}
\Big ) \|u_{n}\|^{2}-\mu
\Big ( \frac{1}{q}-\frac{1}{2Q_{\lambda}^{\ast}}
\Big ) S^{-\frac{q}{2}}
|\Omega|^{\frac{Q_{\lambda}^{\ast}-q}
{Q_{\lambda}^{\ast}}}\|u_{n}\|^{q}.
\end{align}
This means that
$\{u_{n}\}$ is bounded in
$\mathring{S}_1^2(\Omega)$
since
$1<q<2$. More generally, let us assume that
$u_{n}
\rightharpoonup u$ weakly in
$\mathring{S}_1^2(\Omega)$ and
$u_{n}\rightarrow u$ strongly in
$L^{p}(\Omega)$ with
$1\leq
p<Q^{*}$.
Applying the concentration compactness
 principle on the
Heisenberg group (see Sun et al. \cite[Theorem 3.1]{sun}), one has
\begin{align*}
 |u_{n}|^{Q^{*}}
 & \rightharpoonup\zeta\geq
|u|^{Q^{*}}+\sum_{j\in J}\zeta_{j}
\delta_{\xi_{j}}, \\
 \|\nabla_{H}u_{n}|^{2}
& \rightharpoonup d\mu\geq|\nabla_{H}u|^{2}
+\sum_{j\in J}\mu_{j}\delta_{\xi_{j}},\\
\Big ( \displaystyle\int_{\Omega}
\frac{|u_{n}(\eta)|^{Q_{\lambda}^{\ast}}}
{|\eta^{-1}\xi|^{\lambda}}d\eta
\Big ) |u_{n}(\xi)|^{Q_{\lambda}^{\ast}}
& \rightharpoonup
\Big ( \displaystyle\int_{\Omega}
\frac{|u(\eta)|^{Q_{\lambda}^{\ast}}}
{|\eta^{-1}\xi|^{\lambda}}d\eta
 \Big ) |u(\xi)|^{Q_{\lambda}^{\ast}}
 +\sum_{j\in
J}\nu_{j}\delta_{\xi_{j}} ,
\end{align*}
where
$J$ is at most countable index set,
$\xi_{j}\in\Omega$ and
$\delta_{\xi_{j}}$ is the Dirac mass at
$\xi_{j}$. Furthermore, we
have
\begin{equation}\label{3.5}
\zeta_{j}, \mu_{j}, \nu_{j}>0, \quad
S_{HG}\nu_{j}^{\frac{1}{Q_{\lambda}^{\ast}}}
\leq\mu_{j},\quad
\nu_j^{\frac{Q}{2Q-\lambda}} \leq
C(Q,\lambda)^{\frac{Q}{2Q-\lambda}}\zeta_j.
\end{equation}
Now, we claim that $J=\emptyset$. In fact, let us assume that the
hypothesis $\mu_{j}\neq0$ holds for some $j\in J$. Then for
$\varepsilon>0$ small enough, by  Capogna et al.
\cite[Lemma 3.2]{chen}, we can take the cut-off function
$\psi_{\varepsilon,j}\in C_{0}^{\infty}(B_{H}(\xi_{j},\varepsilon))$
 such that
$0\leq\psi_{\varepsilon,j}\leq1$  and
\begin{equation}\label{3.6}
\begin{cases}
\psi_{\varepsilon,j}=1
&\mbox{in} \quad B_{H}(\xi_{j},
\frac{\varepsilon}{2}),\\
\psi_{\varepsilon,j}=0
&\mbox{in}
\quad
\Omega\backslash B_{H}(\xi_{j},
\varepsilon),\\
|\nabla_{H}\psi_{\varepsilon,j}
|\leq\frac{4}{\varepsilon}.
\end{cases}
\end{equation}

Now, by the boundedness of
$\{\psi_{\varepsilon,j}u_{n}\}$ and
\eqref{3.2}, we have
\begin{align*}
\begin{split}
\lim_{n \to\infty}I_{\mu}'(u_{n})
[\psi_{\varepsilon,j}u_{n}]
& = \lim_{n \to\infty}
\Big ( \int_{\Omega}\nabla_H u_{n}\nabla_H
\psi_{\varepsilon,j}u_{n}d\xi-\mu
\int_{\Omega}u_{n}^{q-1}
\psi_{\varepsilon,j}u_{n}d\xi
\\
& -\int_{\Omega}\int_{\Omega}
\frac{|u_{n}|^{Q_{\lambda}^{\ast}}
|u_{n}(\eta)|^{Q_{\lambda}^{\ast}-1}
\psi_{\varepsilon,j}u_{n}(\eta)}
{|\eta^{-1}\xi|^{\lambda}}
d\eta d\xi
\Big ) =0,\\
\end{split}
\end{align*}
which gives
\begin{align}\label{3.7}
&\int_{\Omega}|\nabla_H u_{n}|^{2}
\psi_{\varepsilon,j}d\xi+
\int_{\Omega}\nabla_H u_{n}\nabla_{H}
\psi_{\varepsilon,j}u_{n}d\xi\\
&=\nonumber\mu\int_{\Omega}
u_{n}^{q-1}
\psi_{\varepsilon,j}u_{n}d\xi+
\int_{\Omega}\int_{\Omega}
\frac{|u_{n}(\xi)|^{Q_{\lambda}^{\ast}}
|u_{n}(\eta)|^{Q_{\lambda}^{\ast}-1}
\psi_{\varepsilon,j}u_{n}(\eta)}
{|\eta^{-1}\xi|^{\lambda}}
d\eta d\xi+o(1),
\end{align}
where
$o(1)\rightarrow0$ as
$n\rightarrow\infty$. From
\eqref{3.6}, we   obtain
\begin{align*}
\int_{\Omega}u_{n}^{q-1}
\psi_{\varepsilon,j}u_{n}d\xi
&
\leq|B_{H}(\xi_{j},
\varepsilon)|^{\frac{Q_{\lambda}^{\ast}-q}
{Q_{\lambda}^{\ast}}}
\Big ( \int_{B_{H}(\xi_{j},\varepsilon)}
|u_{n}|^{Q_{\lambda}^{\ast}}d\xi
 \Big ) ^{\frac{q}{Q_{\lambda}^{\ast}}}\\
&\leq|\alpha_{Q}
\varepsilon^{Q}|^{\frac{Q_{\lambda}^{\ast}-q}
{Q_{\lambda}^{\ast}}}S^{-\frac{q}{2}}\|u_{n}\|^{q}.
 \end{align*}
Thus, by the boundedness of
$\{u_{n}\}$, we obtain that
\begin{equation}\label{3.8}
\lim_{\varepsilon \to 0}\lim_{n
\to\infty}\int_{\Omega}u_{n}^{q-1}
\psi_{\varepsilon,j}u_{n}d\xi=0.
\end{equation}
Furthermore, by \eqref{3.7} and the H\"{o}lder inequality, we have
\begin{align}
&\nonumber\lim_{\varepsilon \to 0}\lim_{n
\to\infty}
\Big |\int_{\Omega}u_{n}\nabla_H
u_{n}\nabla_{H}\psi_{\varepsilon,j}d\xi
\Big|\\
&\leq C\lim_{\varepsilon \to
0}
\Big ( \int_{B_{H}(\xi_{j},\varepsilon)}
|u_n|^{Q_{\lambda}^{\ast}}d\xi
\Big ) ^{\frac{1}{Q_{\lambda}^{\ast}}}
\Big ( \int_{B_{H}(\xi_{j},\varepsilon)}
|\nabla_{H}
\psi_{\varepsilon,j}|^{Q_{\lambda}^{\ast}}d\xi
 \Big ) ^{\frac{1}{Q_{\lambda}^{\ast}}}=0.
\end{align}
Hence,
\begin{align}
\label{3.10}
&
\lim_{\varepsilon \to 0}
\lim_{n \to\infty}\int_{\Omega}|\nabla_H
u_{n}|^{2}\psi_{\varepsilon,j}
(\xi)d\xi \\
\notag
\geq
& \lim_{\varepsilon \to
0}
\Big ( \mu_{j}+\int_{B_{H}
(\xi_{j},\varepsilon)}|\nabla_H
u|^{2}\psi_{\varepsilon,j}(\xi)d\xi
\Big ) =\mu_{j}
\end{align}
and
\begin{align}
 \label{3.11}
&  \lim_{\varepsilon \to 0}
\lim_{n
\to\infty}\int_{\Omega}
\int_{\Omega}\frac{|u_{n}(\xi)
|^{Q_{\lambda}^{\ast}}|u_{n}
(\eta)|^{Q_{\lambda}^{\ast}}
\psi_{\varepsilon,j}(\xi)}
{|\eta^{-1}\xi|^{\lambda}}
d\eta d\xi \\
\notag
&
= \lim_{\varepsilon \to
0}
\Big ( \nu_{j}+\int_{B_{H}
(\xi_{j},\varepsilon)}
\int_{\Omega}
\frac{|u(\xi)|^{Q_{\lambda}^{\ast}}
|u(\eta)|^{Q_{\lambda}^{\ast}}
\psi_{\varepsilon,j}(\xi)}
{|\eta^{-1}\xi|^{\lambda}}
d\eta d\xi
\Big ) =\nu_{j}.
\end{align}
So, from \eqref{3.7}-\eqref{3.11},
we   conclude that
$\nu_{j}\geq\mu_{j}$.
Hence, it follows from
\eqref{3.5} that
$\mu_{j}\geq
S_{HG}^{\frac{Q_{\lambda}^{\ast}}
{Q_{\lambda}^{\ast}-1}}$.

Furthermore, according to \eqref{3.2}
and the Young inequality, we
obtain
\begin{align}
\label{3.12}
c
&
=\nonumber\lim_{n \to\infty}
\Big\{I_{\mu}(u_{n})-\frac{1}
{2Q_{\lambda}^{\ast}}I_{\mu}'(u_{n})u_{n}
\Big\}\\
&
=\lim_{n
\to\infty}
\Big\{(\frac{1}{2}-\frac{1}
{2Q_{\lambda}^{\ast}})\|u_{n}\|^{2}-
\mu(\frac{1}{q}-\frac{1}
{2Q_{\lambda}^{\ast}})
\int_{\Omega}|u_{n}|^{q}d\xi
\Big\}\\
&
\nonumber
\geq(\frac{1}{2}-\frac{1}
{2Q_{\lambda}^{\ast}})\mu_{j}+
(\frac{1}{2}-\frac{1}
{2Q_{\lambda}^{\ast}})\|u\|^{2}
-\mu(\frac{1}{q}-\frac{1}
{2Q_{\lambda}^{\ast}})S^{-\frac{q}{2}}
|\Omega|^{\frac{Q_{\lambda}^{\ast}-q}
{Q_{\lambda}^{\ast}}}\|u\|^{q}.
\end{align}
Let
$$
f(t)=(\frac{1}{2}-\frac{1}
{2Q_{\lambda}^{\ast}})t^{2}-
\mu(\frac{1}{q}-\frac{1}
{2Q_{\lambda}^{\ast}})S^{-\frac{q}{2}}
|\Omega|^{\frac{Q_{\lambda}^{\ast}-q}
{Q_{\lambda}^{\ast}}}t^{q}.
$$
Then by a simple calculation,
we   see that
$$
t_{0}=(\mu(\frac{1}{q}-\frac{1}
{2Q_{\lambda}^{\ast}})S^{-\frac{q}{2}}
|\Omega|^{\frac{Q_{\lambda}^{\ast}-q}
{Q_{\lambda}^{\ast}}})^{\frac{Q_{\lambda}^{\ast}}
{(Q_{\lambda}^{\ast}-1)(2-q)}}
$$
is the minimum value point of
$f(x)$, and the minimum value of
$f(x)$ is
\begin{align*}
f(t_{0})
&=(\mu(\frac{1}{q}
-\frac{1}{2Q_{\lambda}^{\ast}})S^{
-\frac{q}{2}}
|\Omega|^{\frac{Q_{\lambda}^{\ast}-q}
{Q_{\lambda}^{\ast}}})^{\frac{2}{2-q}}
(2q)^{\frac{q}{2-q}}(\frac{q}{2}-1)\\
&<(\mu(\frac{1}{q}
-\frac{1}{2Q_{\lambda}^{\ast}})S^{
-\frac{q}{2}}
|\Omega|^{\frac{Q_{\lambda}^{\ast}-q}
{Q_{\lambda}^{\ast}}})^{\frac{2}{2-q}}
(\frac{Q_{\lambda}^{\ast}}
{Q_{\lambda}^{\ast}-1}q)^{\frac{q}{2-q}}
=D\mu^{\frac{2}{2-q}}.
\end{align*}
Thus,
$$
c>(\frac{1}{2}
-\frac{1}{2Q_{\lambda}^{\ast}})
S_{HG}^{\frac{Q_{\lambda}^{\ast}}
{Q_{\lambda}^{\ast}-1}}-D\mu^{\frac{2}{2-q}},
$$
which contradicts \eqref{3.2}.
Thus,
$J=\emptyset$, and one has
\begin{equation}
\label{3.13}
\int_{\Omega}
\frac{|u_{n}(\eta)|^{Q_{\lambda}^{\ast}}}
{|\eta^{-1}\xi|^{\lambda}}d\eta|u_{n}
(\xi)|^{Q_{\lambda}^{\ast}}\rightarrow
\int_{\Omega}
\frac{|u(\eta)|^{Q_{\lambda}^{\ast}}}
{|\eta^{-1}\xi|^{\lambda}}d
\eta|u(\xi)|^{Q_{\lambda}^{\ast}}\quad
 \mbox{as} \quad n\rightarrow \infty.
\end{equation}
From \eqref{3.2} and \eqref{3.13},
we get
\begin{align}\label{3.14}
&\nonumber\int_{\Omega}
\nabla_H u_{n}\nabla_H \varphi
d\xi-\mu\int_{\Omega}|u_{n}|^{q-1}\varphi
d\xi\\
&-\int_{\Omega}\int_{\Omega}
\frac{|u_{n}(\xi)|^{Q_{\lambda}^{\ast}}
|u_{n}(\eta)|^{Q_{\lambda}^{\ast}-1}
\varphi}{|\eta^{-1}\xi|^{\lambda}}
d\eta d\xi=o(1).
\end{align}
Choose
$\varphi=u$ in \eqref{3.14}. Then
\begin{equation}\label{3.15}
\|u\|^{2}-\mu
\int_{\Omega}|u|^{q}d\xi-\int_{\Omega}
\int_{\Omega}\frac{|u(\xi)|^{Q_{\lambda}^{\ast}}
|u(\eta)|^{Q_{\lambda}^{\ast}}}
{|\eta^{-1}\xi|^{\lambda}}
d\eta d\xi=0.
\end{equation}
By \eqref{3.2} and \eqref{3.13},
we also have
\begin{equation}\label{3.16}
\lim_{n
\to\infty}\|u_{n}\|^{2}-\mu
\int_{\Omega}|u|^{q}d\xi-\int_{\Omega}
\int_{\Omega}\frac{|u(\xi)|^{Q_{\lambda}^{\ast}}
|u(\eta)|^{Q_{\lambda}^{\ast}}}
{|\eta^{-1}\xi|^{\lambda}}
d\eta d\xi=0.
\end{equation}
Combining \eqref{3.15} and \eqref{3.16}, we obtain that $\lim_{n
\to\infty}\|u_{n}\|=\|u\|$. Thus, uniform convexity follows from
$\mathring{S}_1^2(\Omega)$, so we can conclude that
$u_{n}\rightarrow u$ in $\mathring{S}_1^2(\Omega)$. This completes
the proof of Lemma \ref{lem3.1}.
\end{proof}

\begin{lemma}\label{lem3.2}
Let
$1<q<2$. Then there exist
$\Lambda_{0},\rho_{0}>0$ such that if
$\mu\in(0,\Lambda_{0})$, then
$$
\inf_{u\in B\rho_{0}}I_{\mu}(u)<0
$$
 and
$$
I_{\mu}(u)>\frac{1}{2}
g(\rho_{0})\rho_{0}^{q}>0\
 \ \hbox{for all}\ \ u\in S_{\rho_{0}},
 $$
where
$g(s)=\frac{1}{2}s^{2-q}-a_{0}
s^{2Q_{\lambda}^{\ast}-q}$.
\end{lemma}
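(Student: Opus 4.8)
The plan is to read Lemma~\ref{lem3.2} as the statement that $I_{\mu}$ has a concave--convex (local linking) geometry near the origin, and to prove the two assertions separately. For the estimate on the sphere $S_{\rho_{0}}$ I would first bound each nonlinearity solely in terms of $\|u\|$. By the H\"older inequality together with the Sobolev constant $S$ from \eqref{e2.3} one has, exactly as in \eqref{3.3},
\[
\int_{\Omega}|u|^{q}d\xi\le S^{-\frac q2}|\Omega|^{\frac{Q_{\lambda}^{\ast}-q}{Q_{\lambda}^{\ast}}}\|u\|^{q},
\]
while the definition \eqref{2.7} of $S_{HG}$ gives
\[
\int_{\Omega}\int_{\Omega}\frac{|u(\xi)|^{Q_{\lambda}^{\ast}}|u(\eta)|^{Q_{\lambda}^{\ast}}}{|\eta^{-1}\xi|^{\lambda}}d\eta d\xi\le S_{HG}^{-Q_{\lambda}^{\ast}}\|u\|^{2Q_{\lambda}^{\ast}}.
\]
Inserting these into \eqref{5.1} and factoring out $\|u\|^{q}$ yields, for $\|u\|=s$,
\[
I_{\mu}(u)\ge s^{q}\Big(\tfrac12 s^{2-q}-a_{0}s^{2Q_{\lambda}^{\ast}-q}-\tfrac{\mu}{q}C\Big)=s^{q}\big(g(s)-\tfrac{\mu}{q}C\big),
\]
where $a_{0}=\tfrac{1}{2Q_{\lambda}^{\ast}}S_{HG}^{-Q_{\lambda}^{\ast}}$ and $C=|\Omega|^{(Q_{\lambda}^{\ast}-q)/Q_{\lambda}^{\ast}}S^{-q/2}$, so that $g$ is precisely the function in the statement.

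Next I would analyze $g$ as a function of one real variable. Since $1<q<2<Q_{\lambda}^{\ast}$ we have $0<2-q<2Q_{\lambda}^{\ast}-q$, so $g(s)\to0$ as $s\to0^{+}$, $g$ is positive on a maximal interval $(0,\bar s)$, and attains a strictly positive maximum at the unique critical point
\[
\rho_{0}=\Big(\tfrac{2-q}{2a_{0}(2Q_{\lambda}^{\ast}-q)}\Big)^{\frac{1}{2Q_{\lambda}^{\ast}-2}}.
\]
I then fix this $\rho_{0}$ and choose $\Lambda_{0}>0$ so small that $\tfrac{\mu}{q}C<\tfrac12 g(\rho_{0})$ for every $\mu\in(0,\Lambda_{0})$. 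With these choices the displayed lower bound gives $I_{\mu}(u)\ge\rho_{0}^{q}\big(g(\rho_{0})-\tfrac{\mu}{q}C\big)>\tfrac12 g(\rho_{0})\rho_{0}^{q}>0$ for all $u\in S_{\rho_{0}}$, which is the second assertion.

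Finally, to see that $\inf_{u\in B_{\rho_{0}}}I_{\mu}(u)<0$, I would test with small multiples of a fixed function. Pick any $w\in\mathring{S}_1^2(\Omega)$ with $w\ge0$ and $w\not\equiv0$, so that $\int_{\Omega}|w|^{q}d\xi>0$, and consider $I_{\mu}(tw)$ for $t>0$ small. Because $q<2<2Q_{\lambda}^{\ast}$, the concave term $-\tfrac{\mu}{q}t^{q}\int_{\Omega}|w|^{q}d\xi$ dominates the $t^{2}$ and $t^{2Q_{\lambda}^{\ast}}$ contributions as $t\to0^{+}$, whence $I_{\mu}(tw)<0$ for $t$ small; since $\|tw\|=t\|w\|<\rho_{0}$ for such $t$, these functions lie in $B_{\rho_{0}}$ and the infimum is negative. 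The only point requiring care is purely bookkeeping: one must match the constant $a_{0}$ coming from the $S_{HG}$ estimate to the coefficient in the definition of $g$, and verify that a \emph{single} radius $\rho_{0}$---namely the maximizer of $g$---serves both claims, after which $\Lambda_{0}$ is chosen last. There is no genuine analytic difficulty, since the boundedness of the nonlocal term is already supplied by \eqref{2.7}.
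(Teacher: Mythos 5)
Your proposal is correct and follows essentially the same route as the paper: the same H\"older/Sobolev and $S_{HG}$ bounds leading to $I_{\mu}(u)\ge\|u\|^{q}(g(\|u\|)-\tfrac{\mu}{q}C)$, the same maximizer $\rho_{0}$ of $g$, the same choice of $\Lambda_{0}$ via $\tfrac{\mu}{q}C<\tfrac12 g(\rho_{0})$, and the same observation that $I_{\mu}(tw)\sim -\tfrac{\mu}{q}t^{q}\int_{\Omega}|w|^{q}d\xi<0$ as $t\to0^{+}$ to get a negative infimum on the ball. No gaps; the constants you identify match those in the paper's argument.
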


\begin{proof}[\bf Proof.]
 First, by the Young inequality,
we obtain that
\begin{align} \label{3.17}
I_{\mu}(u) & =\frac{1}{2}\|u\|^{2}
-\frac{\mu}{q}\int_{\Omega}|u|^{q}d\xi
-\frac{1}{2Q_{\lambda}^{\ast}}
\int_{\Omega}\int_{\Omega}
\frac{|u(\xi)|^{Q_{\lambda}^{\ast}}
|u(\eta)|^{Q_{\lambda}^{\ast}}}
{|\eta^{-1}\xi|^{\lambda}}d\eta
d\xi\\ \notag
&\geq\frac{1}{2}\|u\|^{2}
-\frac{\mu}{q}S^{
-\frac{q}{2}}
|\Omega|^{\frac{Q_{\lambda}^{\ast}-q}
{Q_{\lambda}^{\ast}}}\|u\|^{q}
-\frac{C(\lambda,Q)}{2Q_{\lambda}^{\ast}}
\Big ( \int_{\Omega}
|u|^{Q^{\ast}}d\xi
\Big ) ^{\frac{2Q-\lambda}{Q}}\\ \notag
&\geq\frac{1}{2}\|u\|^{2}
-\frac{\mu}{q}S^{
-\frac{q}{2}}
|\Omega|^{\frac{Q_{\lambda}^{\ast}-q}
{Q_{\lambda}^{\ast}}}\|u\|^{q}
-\frac{1}{2Q_{\lambda}^{\ast}
S_{HG}^{Q_{\lambda}^{\ast}}}
\|u\|^{2Q_{\lambda}^{\ast}}\\ \notag
&=\|u\|^{q}
\Big \{\frac{1}{2}\|u\|^{2-q}
-\frac{\mu}{q}S^{
-\frac{q}{2}}
|\Omega|^{\frac{Q_{\lambda}^{\ast}-q}
{Q_{\lambda}^{\ast}}}
-\frac{1}{2Q_{\lambda}^{\ast}
S_{HG}^{Q_{\lambda}^{\ast}}}
\|u\|^{2Q_{\lambda}^{\ast}-q}
\Big \}.
 \end{align}
Let
$a_{0}=\frac{1}{2Q_{\lambda}^{\ast}
S_{HG}^{Q_{\lambda}^{\ast}}}>0$.
Then it follows from \eqref{3.17}
 that
\begin{equation}\label{3.18}
I_{\mu}(u)\geq\|u\|^{q}
\Big \{\frac{1}{2}\|u\|^{2-q}
-\frac{\mu}{q}S^{
-\frac{q}{2}}
|\Omega|^{\frac{Q_{\lambda}^{\ast}-q}
{Q_{\lambda}^{\ast}}}
-a_{0}\|u\|^{2Q_{\lambda}^{\ast}-q}
\Big \}.
\end{equation}
Take
$$
g(s)=\frac{1}{2}s^{2-q}-a_{0}
s^{2Q_{\lambda}^{\ast}-q}.
$$
Then the maximum value point of
$g(s)$ is
$$
\rho_{0}=
\Big ( \frac{2-q}{2a_{0}(2Q_{\lambda}^{\ast}-q)}
\Big ) ^{\frac{1}{2Q_{\lambda}^{\ast}-2}},
$$
and the maximum value of
$g(s)$ is
$$
g(\rho_{0})=\frac{\rho_{0}^{2-q}}{2}
\Big ( 1
-\frac{2-q}{2Q_{\lambda}^{\ast}-q}
\Big ) >0.
$$
Hence, if
$$
\Lambda_{0}=\frac{1}{2}q
S^{\frac{q}{2}}|\Omega|^{\frac{q-Q_{\lambda}^{\ast}}
{Q_{\lambda}^{\ast}}}g(\rho_{0}),
$$
then for all
$\mu\in(0,\Lambda_{0})$,
we have from \eqref{3.18} that
$$
I_{\mu}(u)\geq
\frac{1}{2}g(\rho_{0})\rho_{0}^{q}>0
\ \ \hbox{for all}\ \ u\in S_{\rho_{0}}.
$$
Furthermore, for any
$u\in \mathring{S}_1^2(\Omega)\backslash\{0\}$,
one has
$$
\lim_{s\rightarrow0^{+}}
\frac{I_{\mu}(su)}{s^{q}}=
-\frac{\mu}{q}\int_{\Omega}
|u|^{q}d\xi<0,
$$
which implies that
$u\in B_{\rho_{0}}$ makes
$I_{\mu}(u)<0$. From
this, we can conclude that
$\inf_{u\in B_{\rho_{0}}}I_{\mu}(u)<0,$
and the proof of Lemma \ref{lem3.2}
is complete.
\end{proof}

\begin{lemma}\label{lem3.3}
Let
$1<q<2$ and assume that
$\mu\in(0,\Lambda_{0})$. Then problem
$(1.1)$ has a positive solution
$u_{1}\in \mathring{S}_1^2(\Omega)$
such that
$I_{\mu}(u_{1})<0$.
\end{lemma}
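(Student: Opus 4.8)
The plan is to obtain the first solution as a local minimizer of $I_\mu$ inside the ball $B_{\rho_0}$, exploiting the mountain-pass–type geometry already recorded in Lemma \ref{lem3.2}, and then to upgrade it to a \emph{positive} solution by a maximum-principle argument. First I would set
\[
c_1=\inf_{u\in \overline{B_{\rho_0}}}I_\mu(u).
\]
By Lemma \ref{lem3.2}, $c_1<0$, whereas $I_\mu(u)>\tfrac12 g(\rho_0)\rho_0^q>0$ on the sphere $S_{\rho_0}$. Thus $c_1$ is separated from the boundary: any minimizing sequence must eventually lie strictly inside $B_{\rho_0}$, since a point of $S_{\rho_0}$ carries strictly positive energy while $I_\mu(u_n)\to c_1<0$. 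Applying Ekeland's variational principle on the complete metric space $\overline{B_{\rho_0}}$ then yields a sequence $\{u_n\}\subset \overline{B_{\rho_0}}$ with $I_\mu(u_n)\to c_1$ and $\|I_\mu'(u_n)\|\to 0$; because $\|u_n\|<\rho_0$ for $n$ large, the usual variation $w=u_n+tv$ is admissible and the Ekeland inequality passes to the free derivative, so $\{u_n\}$ is a genuine (unconstrained) $(PS)_{c_1}$ sequence for $I_\mu$ on $\mathring{S}_1^2(\Omega)$.

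Next I would verify that $c_1$ lies below the compactness threshold of Lemma \ref{lem3.1}. Since $c_1<0$, it suffices that the right-hand side of \eqref{3.1}, namely $\big(\tfrac12-\tfrac1{2Q_\lambda^\ast}\big)S_{HG}^{Q_\lambda^\ast/(Q_\lambda^\ast-1)}-D\mu^{2/(2-q)}$, be nonnegative; this holds for all small $\mu$, so choosing $\mu_\ast\le\Lambda_0$ accordingly we get $c_1<0\le$ (threshold) for $\mu\in(0,\mu_\ast)$. By Lemma \ref{lem3.1} the $(PS)_{c_1}$ condition then holds, so along a subsequence $u_n\to u_1$ strongly in $\mathring{S}_1^2(\Omega)$. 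Passing to the limit gives $I_\mu(u_1)=c_1<0$ and $I_\mu'(u_1)=0$, whence $u_1$ is a nontrivial weak solution of \eqref{1.1} (nontrivial because $I_\mu(0)=0>c_1$).

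Finally, for positivity I would repeat the argument with $\{u_n\}$ replaced by $\{|u_n|\}$. Since on the Heisenberg group $|\nabla_H|u|\,|\le|\nabla_H u|$ a.e. and the remaining terms of $I_\mu$ depend only on $|u|$, one has $I_\mu(|u_n|)\le I_\mu(u_n)$, so $\{|u_n|\}$ is still minimizing and lands in $\overline{B_{\rho_0}}$; the limit therefore satisfies $u_1\ge 0$ and solves \eqref{1.1}. The strong maximum principle for the Kohn--Laplacian (Bony's maximum principle together with the Harnack inequality for sub-Laplacians) then forces $u_1>0$ in $\Omega$.

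The main obstacle is the bookkeeping that simultaneously keeps the $(PS)$ sequence strictly inside $B_{\rho_0}$ \emph{and} below the threshold of Lemma \ref{lem3.1} for every admissible $\mu$; this is precisely what fixes the size of $\mu_\ast$. Once the level $c_1$ is controlled, the strong convergence supplied by Lemma \ref{lem3.1} and the passage $|\nabla_H|u|\,|\le|\nabla_H u|$ make the existence and positivity follow routinely.
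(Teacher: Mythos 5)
Your proposal follows essentially the same route as the paper: minimize $I_\mu$ over the closed ball $\overline{B_{\rho_0}}$ using Lemma \ref{lem3.2} for the geometry, extract a $(PS)$ sequence via Ekeland's variational principle, pass to a strong limit via Lemma \ref{lem3.1}, and conclude positivity from nonnegativity of the minimizing sequence together with Bony's maximum principle. Your explicit check that the level $c_1<0$ sits below the threshold in \eqref{3.1} (which requires the right-hand side of \eqref{3.1} to be nonnegative, hence possibly a further smallness restriction on $\mu$) is a point the paper leaves implicit, and is a welcome clarification rather than a deviation.
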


\begin{proof}[\bf Proof.]
Let
$\rho_{0}$ be as in Lemma \ref{lem3.2}
 and set
\begin{equation}\label{3.33}
w=\inf_{u\in B_{\rho_{0}}}
I_{\mu}(u)<0<\inf_{u\in
S_{\rho_{0}}}I_{\mu}(u).
\end{equation}
Note that
$I_{\mu}(|u|)=I_{\mu}(u)$.
According to the Ekeland
variational principle  \cite{a2},
 we know that
$$
I_{\mu}(u_{n})
\leq
\inf_{u\in B_{\rho_{0}}}I_{\mu}(u)+
\frac{1}{n},\quad
I_{\mu}(v)\geq I_{\mu}(u_{n})
-\frac{1}{n}\|v-u_{n}\|
$$
for all
$v\in B_{\rho_{0}}$
and some nonnegative minimizing sequence
$\{u_{n}\}\subset B_{\rho_{0}}$.
From this and \eqref{3.33}, we get
$I_{\mu}'(u)\rightarrow0$ and
$I_{\mu}(u_{n})\rightarrow w$. Because
of
$u_{n}\geq0$ and
$\|u_{n}\|\leq\rho_{0}$, there is
$u_{1}\in
B_{\rho_{0}}$ and
$u_{1}\geq0$ satisfying
$u_{n}\rightharpoonup
u_{1}$ in
$\mathring{S}_1^2(\Omega)$ as
$n\rightarrow\infty$. It
follows from Lemma \ref{lem3.1} that
$u_{n}\rightarrow u_{1}$ in
$\mathring{S}_1^2(\Omega)$ and
$$
w=\lim_{n\rightarrow\infty}
I_{\mu}(u_{n})=
I_{\mu}(u_{1})<0.
$$
 Hence,
we have
$u_{1}\geq0$ and
$u_{1}\not\equiv0$. Moreover,
$u_{1}$ is a
solution of problem \eqref{1.1}, that is
$$
{-\Delta_H u_{1} }=\mu|u_{1}|^{q-1}+
\int_{\Omega}
\frac{|u_{1}(\eta)|^{Q_{\lambda}^{\ast}}}
{|\eta^{-1}\xi|^{\lambda}}
d\eta|u_{1}|^{Q_{\lambda}^{\ast}-1}.
$$
The maximum principle (see Bony \cite{bo}) implies that $u_{1}>0$ in
$\Omega$. Thus, $u_{1}$ is a positive solution of problem
\eqref{1.1}. This completes the proof of Lemma \ref{lem3.3}.
\end{proof}

\subsection{The existence of the second
 positive solution of problem \eqref{1.1}}

\begin{lemma}\label{lem3.4}
Let
$1<q<2$ and
$\mu\in(0,\Lambda_{0})$. Then
$I_{\mu}(u)>0$ for all
$u\in S_{\rho_{0}}$. Moreover, there is
$e\in
\mathring{S}_1^2(\Omega)\backslash B_{\rho_{0}}$
satisfying
$I_{\mu}(e)<0$, where
$\Lambda_{0},\rho_{0}$ are as in Lemma
$\ref{lem3.2}.$
\end{lemma}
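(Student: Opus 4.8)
The plan is to verify the two halves of the mountain pass geometry separately, both of which fall out quickly from the structure of $I_{\mu}$ already exposed in Lemma~\ref{lem3.2}. For the first assertion I would simply invoke that lemma directly: it was shown there that whenever $\mu\in(0,\Lambda_{0})$ one has $I_{\mu}(u)\geq\frac{1}{2}g(\rho_{0})\rho_{0}^{q}$ for every $u\in S_{\rho_{0}}$, and since the maximum value $g(\rho_{0})=\frac{\rho_{0}^{2-q}}{2}\bigl(1-\frac{2-q}{2Q_{\lambda}^{\ast}-q}\bigr)$ is strictly positive, this is exactly the required strict positivity on the sphere. No fresh computation is needed for this part.

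For the second assertion I would exhibit $e$ along a fixed ray. Fix any $u_{0}\in\mathring{S}_1^2(\Omega)\setminus\{0\}$ (for definiteness a nonnegative one) and evaluate $I_{\mu}$ on $tu_{0}$ for $t>0$. By the homogeneity of the three terms,
\begin{align*}
I_{\mu}(tu_{0})=\frac{t^{2}}{2}\|u_{0}\|^{2}-\frac{\mu t^{q}}{q}\int_{\Omega}|u_{0}|^{q}d\xi-\frac{t^{2Q_{\lambda}^{\ast}}}{2Q_{\lambda}^{\ast}}\int_{\Omega}\int_{\Omega}\frac{|u_{0}(\xi)|^{Q_{\lambda}^{\ast}}|u_{0}(\eta)|^{Q_{\lambda}^{\ast}}}{|\eta^{-1}\xi|^{\lambda}}d\eta d\xi.
\end{align*}
The nonlocal double integral is strictly positive for $u_{0}\not\equiv0$ since the kernel $|\eta^{-1}\xi|^{-\lambda}$ is positive, and its exponent obeys $2Q_{\lambda}^{\ast}>2>q$ because $Q_{\lambda}^{\ast}=\frac{2Q-\lambda}{Q-2}>1$. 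Hence the last term, carrying the highest power of $t$ with a negative sign, dominates the first two as $t\to+\infty$, so that $I_{\mu}(tu_{0})\to-\infty$.

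Consequently I can choose $t^{\ast}>0$ large enough that simultaneously $\|t^{\ast}u_{0}\|=t^{\ast}\|u_{0}\|>\rho_{0}$ and $I_{\mu}(t^{\ast}u_{0})<0$; setting $e=t^{\ast}u_{0}$ produces an element of $\mathring{S}_1^2(\Omega)\setminus B_{\rho_{0}}$ with $I_{\mu}(e)<0$. There is essentially no obstacle here: the only point to watch is the sign and growth order of the three terms, namely that the leading power $2Q_{\lambda}^{\ast}$ appears with a negative coefficient, which is precisely the feature that forces $I_{\mu}$ to $-\infty$ along every ray and that will later supply a valid mountain pass level to be compared against the compactness threshold of Lemma~\ref{lem3.1}.
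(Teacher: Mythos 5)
Your proposal is correct and follows essentially the same route as the paper: the first assertion is quoted directly from Lemma \ref{lem3.2}, and the second is obtained by evaluating $I_{\mu}$ along a ray and observing that the strictly positive nonlocal term carries the highest power $s^{2Q_{\lambda}^{\ast}}>2$ with a negative sign, forcing $I_{\mu}(su)\to-\infty$. If anything, your version is slightly cleaner, since you only use strict positivity of the double integral rather than the intermediate Hardy--Littlewood--Sobolev bound the paper inserts.
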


\begin{proof}[\bf Proof.]
It is evident that Lemma \ref{lem3.2}
 proves the first assertion.
Therefore, we only need to
prove the rest of Lemma \ref{lem3.4}. Let
$u\in \mathring{S}_1^2(\Omega)\backslash\{0\}$.
Then one has
\begin{align}\label{3.34}
I_{\mu}(su)&=\nonumber\frac{s^{2}}{2}\|u\|^{2}
-\frac{\mu
s^{q}}{q}\int_{\Omega}
|u|^{q}d\xi
-\frac{s^{2Q_{\lambda}^{\ast}}}
{2Q_{\lambda}^{\ast}}
\int_{\Omega}\int_{\Omega}
\frac{|u(\xi)|^{Q_{\lambda}^{\ast}}
|u(\eta)|^{Q_{\lambda}^{\ast}}}
{|\eta^{-1}\xi|^{\lambda}}d\eta d\xi\\
&\leq\frac{s^{2}}{2}\|u\|^{2}
-\frac{C(\lambda,Q)s^{2Q_{\lambda}^{\ast}}}
{2Q_{\lambda}^{\ast}}
\Big ( \int_{\Omega}
|u|^{Q^{\ast}}d\xi
 \Big ) ^{\frac{2Q-\lambda}{Q}}
\rightarrow -\infty, \ \hbox{as} \
s\rightarrow +\infty.
\end{align}
Thus there exists
$e\in \mathring{S}_1^2(\Omega)\backslash
B_{\rho_{0}}$ satisfying
$I_{\mu}(e)<0$. This completes the proof of
Lemma \ref{lem3.4}.
\end{proof}

\begin{lemma}\label{lem3.5}
Let
$1<q<2$ and assume that
$\upsilon_{\gamma}$ is defined by \eqref{2.8}.
 Then there exists
$\Lambda_{1}>0$ such that
$\mu\in(0,\Lambda_{1})$, and
\begin{equation}\label{3.19}
\sup_{s\geq0}I_{\mu}(u_{1}+s\upsilon_{\gamma})<
\Big ( \frac{1}{2}
-\frac{1}{2Q_{\lambda}^{\ast}}
\Big ) S_{HG}^{\frac{Q_{\lambda}^{\ast}}
{Q_{\lambda}^{\ast}-1}}-D\mu^{\frac{2}{2-q}},
\end{equation}
where
$u_{1}$ is the positive solution from
Lemma $\ref{lem3.4}$ and D is from \eqref{3.1}.
\end{lemma}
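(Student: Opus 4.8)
The plan is to reduce \eqref{3.19} to a one–dimensional problem by analysing the map $\Phi(s):=I_\mu(u_1+s\upsilon_\gamma)$ on $[0,\infty)$. Since the Choquard term in $I_\mu$ grows like $s^{2Q_\lambda^\ast}$ while the quadratic and subcritical terms grow at most like $s^2$, we have $\Phi(s)\to-\infty$ as $s\to+\infty$; as $\Phi(0)=I_\mu(u_1)<0$, the supremum in \eqref{3.19} is attained at some $s_\gamma\in(0,\infty)$. Using that $\|\upsilon_\gamma\|^2$ and the Choquard energy of $\upsilon_\gamma$ both tend to $S_{HG}^{(2Q-\lambda)/(Q-\lambda+2)}$ by Lemma \ref{lem2.2}(iii)--(v), I would first show that $s_\gamma$ stays in a fixed compact interval $[a,b]\subset(0,\infty)$, uniformly for $\gamma$ large and $\mu$ small, so that all $s$–dependent remainders below are estimated uniformly.

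The structural heart of the argument is the decomposition
$$\Phi(s)=I_\mu(u_1)+\Psi_\gamma(s)+R(s),\qquad \Psi_\gamma(s):=\frac{s^2}{2}\|\upsilon_\gamma\|^2-\frac{s^{2Q_\lambda^\ast}}{2Q_\lambda^\ast}\,\mathcal B(\upsilon_\gamma),$$
where $\mathcal B(\cdot)$ denotes the Choquard double integral and $R(s)$ gathers the mixed terms in $u_1$ and $\upsilon_\gamma$. Because $u_1$ is a critical point of $I_\mu$, one has $\Phi'(0)=\langle I_\mu'(u_1),\upsilon_\gamma\rangle=0$, so $R$ carries \emph{no} linear term in $s$: the a priori dangerous cross term $s\langle u_1,\upsilon_\gamma\rangle$, which is only of order $\gamma^{-(Q-2)/2}$, is cancelled exactly by the linear parts of the subcritical and nonlocal terms, this being precisely the Euler--Lagrange identity for $u_1$ tested against $\upsilon_\gamma$. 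For the model part I would use
$$\sup_{s\ge0}\Psi_\gamma(s)=\Big(\tfrac12-\tfrac1{2Q_\lambda^\ast}\Big)\Big(\frac{\|\upsilon_\gamma\|^{2Q_\lambda^\ast}}{\mathcal B(\upsilon_\gamma)}\Big)^{\frac1{Q_\lambda^\ast-1}},$$
and, feeding in Lemma \ref{lem2.2}(iv)--(v) together with the identity $\frac{Q_\lambda^\ast}{Q_\lambda^\ast-1}=\frac{2Q-\lambda}{Q-\lambda+2}$, conclude that this supremum equals the threshold constant $(\tfrac12-\tfrac1{2Q_\lambda^\ast})S_{HG}^{Q_\lambda^\ast/(Q_\lambda^\ast-1)}$ appearing in \eqref{3.1}, up to a positive concentration error $O(\gamma^{-\rho})$ with $\rho=\min\{\tfrac{2Q-\lambda}{2},\,Q-2\}$.

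It then remains to extract from $I_\mu(u_1)+R(s)$ enough negativity to drop below the threshold by the prescribed amount $D\mu^{2/(2-q)}$. The point is to \emph{keep}, rather than discard, the genuinely nonlinear remainders in $R$: the strict convexity of $t\mapsto t^q$ forces the subcritical part of $R$ to be bounded above by a strictly negative quantity, governed by the $L^q$–concentration of $\upsilon_\gamma$ (of some rate $\gamma^{-\sigma}$, with a logarithmic correction when $Q=4$), while the remaining nonlocal cross terms are nonnegative and enter with a minus sign, hence contribute a further nonpositive amount. Restricting to $s\in[a,b]$ and combining with $I_\mu(u_1)<0$ yields an estimate of the form
$$\sup_{s\ge0}\Phi(s)\ \le\ \Big(\tfrac12-\tfrac1{2Q_\lambda^\ast}\Big)S_{HG}^{\frac{Q_\lambda^\ast}{Q_\lambda^\ast-1}}+I_\mu(u_1)+C\gamma^{-\rho}-c\,\mu\,\gamma^{-\sigma}.$$

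The main obstacle, and the heart of the matter, is to calibrate the two parameters so that this right–hand side falls strictly below the threshold minus $D\mu^{2/(2-q)}$. One verifies that the concentration rate $\sigma$ is strictly smaller than $\rho$, which opens a window in which the scale may be chosen as $\gamma=\gamma(\mu)$, a suitable negative power of $\mu$ (so that $\gamma\to\infty$ as $\mu\to0$), making the negative gain $\mu\gamma^{-\sigma}$ dominate the positive concentration error $\gamma^{-\rho}$ while staying of order at least $\mu^{2/(2-q)}$; together with the sign of $I_\mu(u_1)$ this is what secures \eqref{3.19} and determines the range $(0,\Lambda_1)$ of admissible $\mu$. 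The delicate aspect is that this exponent window is narrow and depends on $\lambda$ and $Q$ through the competing decay rates of Lemma \ref{lem2.2}: in the borderline ranges the argument relies on the sharp decay constants (and on the logarithmic gain when $Q=4$) and on retaining the beneficial nonlocal cross terms, rather than on orders of magnitude alone. Once \eqref{3.19} is established below the compactness level of Lemma \ref{lem3.1}, it feeds the mountain–pass construction of the second positive solution.
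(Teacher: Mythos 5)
Your outline matches the paper's skeleton — expand $I_{\mu}(u_{1}+s\upsilon_{\gamma})$, use $\langle I_{\mu}'(u_{1}),\upsilon_{\gamma}\rangle=0$ to kill the linear term, confine the maximizing $s_{\gamma}$ to a compact interval $[s_{1},s_{2}]$, and identify $\sup_{s}\{\frac{s^{2}}{2}\|\upsilon_{\gamma}\|^{2}-\frac{s^{2Q_{\lambda}^{\ast}}}{2Q_{\lambda}^{\ast}}\mathcal B(\upsilon_{\gamma})\}$ with the threshold $(\frac12-\frac1{2Q_{\lambda}^{\ast}})S_{HG}^{Q_{\lambda}^{\ast}/(Q_{\lambda}^{\ast}-1)}$ via Lemma \ref{lem2.2}. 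But the heart of the matter — where the strict deficit $D\mu^{2/(2-q)}$ comes from — is handled differently, and your version does not close. The paper's mechanism is to \emph{retain} one nonlocal cross term: using the elementary inequality \eqref{3.20-1} and the pointwise bound $u_{1}\geq t>0$ on $\operatorname{supp}\nu$, it keeps
$-t\,s^{Q_{\lambda}^{\ast}-1}\int_{\Omega}\int_{\Omega}|u_{1}(\xi)|^{Q_{\lambda}^{\ast}}|\upsilon_{\gamma}(\eta)|^{Q_{\lambda}^{\ast}-1}|\eta^{-1}\xi|^{-\lambda}\,d\eta\,d\xi$,
bounds it above by $-C_{1}\gamma^{-\frac{Q-\lambda+2}{Q-2}}$ via Lemma \ref{lem2.2}(i), and — crucially — this gain carries \emph{no} factor of $\mu$. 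A single choice $\gamma^{-\frac{Q-\lambda+2}{Q-2}}=\mu^{q/(2-q)}$ then gives $C_{1}\mu^{q/(2-q)}>D\mu^{2/(2-q)}$ for all $\mu<\Lambda_{1}:=C_{1}/D$, with no two-sided constraint on $\gamma$. You instead declare all nonlocal cross terms "a further nonpositive amount" to be discarded, and the $\mu$-term (which the paper throws away entirely, since by convexity $(u_{1}+s\upsilon_{\gamma})^{q}-u_{1}^{q}-qsu_{1}^{q-1}\upsilon_{\gamma}\geq 0$) becomes your sole source of negativity.

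That substitution is where the gap lies. The convexity remainder of $t\mapsto t^{q}$ yields a gain of order $\mu\gamma^{-\sigma}$ with, at best, $\sigma=2$ (the $L^{2}$-concentration rate of $\upsilon_{\gamma}$), so you must satisfy simultaneously $c\mu\gamma^{-\sigma}\gtrsim\mu^{2/(2-q)}$ (an upper bound on $\gamma$) and $c\mu\gamma^{-\sigma}\gtrsim C\gamma^{-\rho}$ (a lower bound on $\gamma$). These are compatible for small $\mu$ only if $2\sigma<q\rho$, i.e. $\rho>4/q>2$; with $\rho=\min\{\frac{2Q-\lambda}{2},Q-2\}$ this already fails outright for $Q=4$ and is a genuine restriction you never verify — your assertion "one verifies that $\sigma$ is strictly smaller than $\rho$" is both unproved and insufficient (the needed inequality is $2\sigma<q\rho$, not $\sigma<\rho$). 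Your closing remark that the borderline cases "rely on retaining the beneficial nonlocal cross terms" concedes exactly the ingredient you discarded two paragraphs earlier, so the proposal is internally inconsistent on the one step that distinguishes this lemma from a routine fibering computation. To repair it, reinstate the cross term $|u_{1}|^{Q_{\lambda}^{\ast}}|\upsilon_{\gamma}|^{Q_{\lambda}^{\ast}-1}$ as in \eqref{3.21}--\eqref{3.27}, quantify it through $u_{1}\geq t$ and Lemma \ref{lem2.2}(i), and calibrate $\gamma(\mu)$ by the single equation $\gamma^{-\frac{Q-\lambda+2}{Q-2}}=\mu^{q/(2-q)}$.
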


\begin{proof}[\bf Proof.]
Because
$u_{1}$ is the positive solution from
Lemma \ref{lem3.4},
there exist positive constants
$t$ and
$T$
satisfying
$t\leq u_{1}(\xi)\leq T$ for any
$\xi\in \sup\nu$, where
$\nu$ is as in \eqref{2.8}. Moreover, one has
$I_{\mu}'(u_{1})u_{1}=0$ and
$I_{\mu}(u_{1})<0$.

Next, it is easy to prove that for any
$a,b>0$, we have
\begin{equation}\label{3.20}
(a+b)^{\sigma}\geq a^{\sigma}+\sigma
a^{\sigma-1}b,\quad 1<\sigma<2
\end{equation}
and
\begin{equation}\label{3.20-1}
(a_{1}+b_{1})^{Q_{\lambda}^{\ast}}
(a_{2}+b_{2})^{Q_{\lambda}^{\ast}}\geq
a_{1}^{\sigma}a_{2}^{\sigma}
+b_{1}^{\sigma}b_{2}^{\sigma} +2\sigma
a_{1}^{\sigma}a_{2}^{\sigma-1}b_{2}
+2\sigma
a_{1}^{\sigma}a_{2}b_{2}^{\sigma-1},\
  2\leq \sigma.
\end{equation}
Hence, for any
$s\geq0$, we have
\begin{align}\label{3.21}
&\nonumber
I_{\mu}(u_{1}+s\upsilon_{\gamma})= I_{\mu}
(u_{1})+\frac{s^{2}}{2}
\|\upsilon_{\gamma}|^{2}+sI'_{\mu}(u_{1})
\upsilon_{\gamma}\\
&\nonumber
-\frac{\mu}{q}\int_{\Omega}
\Big [(u_{1}+s\upsilon_{\gamma})^{q}
-u_{1}^{q}-qsu_{1}^{q-1}
\upsilon_{\gamma}
\Big ]d\xi\\
&\nonumber
-\frac{1}{2Q_{\lambda}^{\ast}}
\int_{\Omega}\int_{\Omega}
\Big [\frac{|u_{1}(\xi)+
s\upsilon_{\gamma}(\xi)|^{Q_{\lambda}^{\ast}}
|u_{1}(\eta)
+s\upsilon_{\gamma}(\eta)|^{Q_{\lambda}^{\ast}}}
{|\eta^{-1}\xi|^{\lambda}}
 \\
&\nonumber
-\frac{|u_{1}(\xi)|^{Q_{\lambda}^{\ast}}
|u_{1}(\eta)|^{Q_{\lambda}^{\ast}}}
{|\eta^{-1}\xi|^{\lambda}}-
2Q_{\lambda}^{\ast}s
\frac{|u_{1}(\xi)|^{Q_{\lambda}^{\ast}}
|u_{1}(\eta)|^{Q_{\lambda}^{\ast}-1}
|\upsilon_{\gamma}(\eta)|}
{|\eta^{-1}\xi|^{\lambda}}
\Big ]d\eta d\xi\\
&\leq\nonumber\frac{s^{2}}{2}
\|\upsilon_{\gamma}\|^{2}
-\frac{s^{2Q_{\lambda}^{\ast}}}
{2Q_{\lambda}^{\ast}}\int_{\Omega}
\int_{\Omega}
\frac{|\upsilon_{\gamma}(\xi)|^{Q_{\lambda}^{\ast}}
|\upsilon_{\gamma}(\eta)|^{Q_{\lambda}^{\ast}}}
{|\eta^{-1}\xi|^{\lambda}}d\eta
d\xi\\
&\nonumber
-s^{Q_{\lambda}^{\ast}-1}
\int_{\Omega}\int_{\Omega}
\frac{|u_{1}(\xi)|^{Q_{\lambda}^{\ast}}
|u_{1}(\eta)
||\upsilon_{\gamma}(\eta)|^{Q_{\lambda}^{\ast}-1}}
{|\eta^{-1}\xi|^{\lambda}}d\eta
d\xi\\
&\nonumber\leq\frac{s^{2}}{2}
\|\upsilon_{\gamma}\|^{2}
-\frac{C(\lambda,Q)s^{2Q_{\lambda}^{\ast}}}
{2Q_{\lambda}^{\ast}}
(\int_{\Omega}|\upsilon_{\gamma}
|^{Q^{\ast}}d\xi)^{\frac{2Q-\lambda}{Q}}\\
&
-ts^{Q_{\lambda}^{\ast}-1}
\int_{\Omega}\int_{\Omega}
\frac{|u_{1}(\xi)|^{Q_{\lambda}^{\ast}}
|\upsilon_{\gamma}(\eta)|^{Q_{\lambda}^{\ast}-1}}
{|\eta^{-1}\xi|^{\lambda}}d\eta
d\xi.
\end{align}
Let
\begin{align*}
\phi(s) & = \frac{s^{2}}{2}
\|\upsilon_{\gamma}\|^{2}
-\frac{C(\lambda,Q)s^{2Q_{\lambda}^{\ast}}}
{2Q_{\lambda}^{\ast}}
 \Big ( \int_{\Omega}
 |\upsilon_{\gamma}|^{Q^{\ast}}d\xi
 \Big ) ^{\frac{2Q-\lambda}{Q}}\\
&
-ts^{Q_{\lambda}^{\ast}-1}
\int_{\Omega}\int_{\Omega}
\frac{|u_{1}(\xi)|^{Q_{\lambda}^{\ast}}
|\upsilon_{\gamma}
(\eta)|^{Q_{\lambda}^{\ast}-1}}
{|\eta^{-1}\xi|^{\lambda}}d\eta
d\xi.
\end{align*}
The definition of
$\phi(s)$ enables us to obtain
$\phi(0)=0$ and
$\phi(s)\rightarrow-\infty,$ as
$s\rightarrow+\infty$. Thus, we can
find
$s_{\gamma}>0$ and positive constants
$s_{1}, s_{2}$
independent of
$\gamma, \mu$, satisfying
\begin{equation}\label{3.22}
\phi(s_{\gamma})=\sup_{s\geq0}\phi(s),
\quad \phi'(s_{\gamma})=0
\end{equation}
and
\begin{equation}\label{3.23}
0<s_{1}\leq s_{\gamma}\leq s_{2}<\infty.
\end{equation}
Therefore, one has
\begin{align}\label{3.24}
&s_{\gamma}\|\upsilon_{\gamma}\|^{2}
-C(\lambda,Q)
s_{\gamma}^{2Q_{\lambda}^{\ast}-1}
 \Big ( \int_{\Omega}
 |\upsilon_{\gamma}|^{Q^{\ast}}d\xi
 \Big ) ^{\frac{2Q-\lambda}{Q}}\\
&\ -t(Q_{\lambda}^{\ast}-1)
s_{\gamma}^{Q_{\lambda}^{\ast}-2}
\int_{\Omega}\int_{\Omega}
\frac{|u_{1}(\xi)|^{Q_{\lambda}^{\ast}}
|\upsilon_{\gamma}
(\eta)|^{Q_{\lambda}^{\ast}-1}}
{|\eta^{-1}\xi|^{\lambda}}d\eta
d\xi=0
\end{align}
and
\begin{align}\label{3.25}
&
\|\upsilon_{\gamma}\|^{2}-
C(\lambda,Q)(2Q_{\lambda}^{\ast}-1)
s_{\gamma}^{2Q_{\lambda}^{\ast}-2}
\Big ( \int_{\Omega}
|\upsilon_{\gamma}|^{Q^{\ast}}d\xi
\Big ) ^{\frac{2Q-\lambda}{Q}}\\
&
-t(Q_{\lambda}^{\ast}-1)
(Q_{\lambda}^{\ast}-2)
s_{\gamma}^{Q_{\lambda}^{\ast}-3}
\int_{\Omega}\int_{\Omega}
\frac{|u_{1}(\xi)|^{Q_{\lambda}^{\ast}}
|\upsilon_{\gamma}
(\eta)|^{Q_{\lambda}^{\ast}-1}}
{|\eta^{-1}\xi|^{\lambda}}d\eta d\xi<0.
\end{align}
From \eqref{3.25},
we obtain that there exists
$s_{1}>0$ (independent of
$\gamma, \mu$) satisfying
$0<s_{1}\leq s_{\gamma}$.

Next, from \eqref{3.24} one has
\begin{align}\label{3.26}
\begin{split}
&
\frac{\|\upsilon_{\gamma}\|^{2}}
{s_{\gamma}^{2Q_{\lambda}^{\ast}-2}}
-C(\lambda,Q)
 \Big ( \int_{\Omega}
 |\upsilon_{\gamma}|^{Q^{\ast}}d\xi
 \Big ) ^{\frac{2Q-\lambda}{Q}}\\
&
-\frac{t(Q_{\lambda}^{\ast}-1)}
{s_{\gamma}^{Q_{\lambda}^{\ast}+1}}
\int_{\Omega}\int_{\Omega}
\frac{|u_{1}(\xi)|^{Q_{\lambda}^{\ast}}
|\upsilon_{\gamma}
(\eta)|^{Q_{\lambda}^{\ast}-1}}
{|\eta^{-1}\xi|^{\lambda}}d\eta
d\xi=0.
\end{split}
\end{align}
This implies that
$s_{\gamma}$ has an upper
bound for
$\gamma>0$ small enough.
If not, making
$s_{\gamma}\rightarrow\infty$
 in \eqref{3.26},
one gets $\int_{\Omega}|\upsilon_{\gamma}|^{Q^{\ast}}d\xi=0$, which
contradicts Lemma \ref{lem2.2} for $\gamma$ small enough. It follows
from \eqref{3.21}, \eqref{3.22}, \eqref{3.23}, and Lemmas
\ref{lem2.1}, and \ref{lem2.2} that
\begin{align}
\label{3.27}
&\sup_{s\geq0} I_{\mu}(u_{1}+
s\upsilon_{\gamma})
\leq
\nonumber\sup_{s\geq0}\Phi(s)
=\Phi(s_{\gamma})\\
&\leq\nonumber\sup_{s\geq0}
 \Big \{\frac{s^{2}}{2}
\|\upsilon_{\gamma}\|^{2}
-\frac{C(\lambda,Q)
s^{2Q_{\lambda}^{\ast}}}{2Q_{\lambda}^{\ast}}
 \Big ( \int_{\Omega}
 |\upsilon_{\gamma}|^{Q^{\ast}}d\xi
 \Big ) ^{\frac{2Q-\lambda}{Q}}
 \Big\}\\
&\ \ \nonumber -ts^{Q_{\lambda}^{\ast}-1}
\int_{\Omega}\int_{\Omega}
\frac{|u_{1}(\xi)|^{Q_{\lambda}^{\ast}}
|\upsilon_{\gamma}
(\eta)|^{Q_{\lambda}^{\ast}-1}}
{|\eta^{-1}\xi|^{\lambda}}d\eta d\xi\\
&\leq\nonumber\sup_{s\geq0}
 \Big \{\frac{s^{2}}{2}
 \|\upsilon_{\gamma}\|^{2}
-\frac{C(\lambda,Q)
s^{2Q_{\lambda}^{\ast}}}{2Q_{\lambda}^{\ast}}
 \Big ( \int_{\Omega}
 |\upsilon_{\gamma}|^{Q^{\ast}}d\xi
 \Big ) ^{\frac{2Q-\lambda}{Q}}
 \Big\}\\
&\ \
\nonumber-t
s^{Q_{\lambda}^{\ast}-1}C(\lambda,Q)
\Big ( \int_{\Omega}
|u_{1}(\xi)|^{Q^{*}}d\xi
\Big ) ^{\frac{2Q-\lambda}{2Q}}
 \Big ( \int_{\Omega}
 |\upsilon_{\gamma}(\eta)|^{2}d\eta
 \Big ) ^{\frac{Q-\lambda+2}{2(Q-2)}}
 \\
&\leq\nonumber\sup_{s\geq0}
\Big \{\frac{s^{2}}{2}
S_{HG}^{\frac{2Q-\lambda}
{Q-\lambda+2}}
-\frac{C(\lambda,Q)s^{2Q_{\lambda}^{\ast}}}
{2Q_{\lambda}^{\ast}}
 \Big ( S^{\frac{Q}{Q-\lambda+2}}
 C(\lambda,Q)^{\frac{-Q}{Q-\lambda+2}}
 \Big ) ^{\frac{2Q-\lambda}{Q}}
 \Big \}\\
&\ \ \nonumber-C_{1}
(\gamma^{-2})^{\frac{Q-\lambda+2}{2(Q-2)}}\\
&
=\nonumber\sup_{s\geq0}
\Big \{\frac{s^{2}}{2}
S_{HG}^{\frac{Q_{\lambda}^{\ast}}
{Q_{\lambda}^{\ast}-1}}
-\frac{C(\lambda,Q)
s^{2Q_{\lambda}^{\ast}}}
{2Q_{\lambda}^{\ast}}
S_{HG}^{\frac{Q_{\lambda}^{\ast}}
{Q_{\lambda}^{\ast}-1}}
\Big \}-C_{1}\gamma^{
-\frac{Q-\lambda+2}{Q-2}}\\
&<
\Big  ( \frac{1}{2}
-\frac{1}{2Q_{\lambda}^{\ast}}
\Big  ) S_{HG}^{\frac{Q_{\lambda}^{\ast}}
{Q_{\lambda}^{\ast}-1}}-C_{1}\gamma^{
-\frac{Q-\lambda+2}{Q-2}},
\end{align}
where
$C_{1}>0$ is independent of
$\gamma$ and
$\mu$.

Let
$\gamma^{
-\frac{Q-\lambda+2}{Q-2}}=\mu^{\frac{q}{2-q}}$
and
$\Lambda_{1}=\frac{C_{1}}{D}$.
Then for any
$\mu\in(0,\Lambda_{1})$, one has
\begin{align}\label{3.28}
\begin{split}
C_{1}\gamma^{
-\frac{Q-\lambda+2}{Q-2}}>
D\mu^{\frac{2}{2-q}}.
\end{split}
\end{align}
By \eqref{3.27} and \eqref{3.28},  equation \eqref{3.19} holds if
$\mu\in(0,\Lambda_{1})$. This completes the proof of Lemma
\ref{lem3.5}.
\end{proof}
From the above discussion,
we get the following result.

\begin{lemma}\label{lem3.6}
Let
$1<q<2$. Then there exists
$\mu_{\ast}>0$ such that for all
$\mu\in(0,\Lambda_{\ast})$,
problem \eqref{1.1} has a positive
solution
$u_{2}\in \mathring{S}_1^2(\Omega)$ satisfying
$I_{\mu}(u_{2})>0$.
\end{lemma}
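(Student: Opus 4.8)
The plan is to obtain $u_{2}$ as a mountain pass critical point of $I_{\mu}$, using the first solution $u_{1}$ from Lemma \ref{lem3.3} as base point and the direction $\upsilon_{\gamma}$ from \eqref{2.9} to build a descending ray. First I would set $\mu_{\ast}:=\min\{\Lambda_{0},\Lambda_{1}\}$, where $\Lambda_{0}$ comes from Lemma \ref{lem3.2} and $\Lambda_{1}$ from Lemma \ref{lem3.5}, and fix $\mu\in(0,\mu_{\ast})$, so that both the mountain pass geometry of Lemma \ref{lem3.4} and the sharp energy estimate of Lemma \ref{lem3.5} are simultaneously available. Since $I_{\mu}(u_{1})<0$ while $I_{\mu}>0$ on $S_{\rho_{0}}$, one sees that $\|u_{1}\|<\rho_{0}$ (equality would force $u_{1}\in S_{\rho_{0}}$ and $I_{\mu}(u_{1})>0$); moreover, because $I_{\mu}(u_{1}+s\upsilon_{\gamma})\to-\infty$ as $s\to+\infty$ by \eqref{3.21}, I can fix $s_{0}>0$ so large that $e:=u_{1}+s_{0}\upsilon_{\gamma}$ lies outside $B_{\rho_{0}}$ and satisfies $I_{\mu}(e)<0$.

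Next I would introduce the mountain pass level
\[
c=\inf_{\gamma\in\Gamma}\max_{\tau\in[0,1]}I_{\mu}(\gamma(\tau)),\qquad \Gamma=\{\gamma\in C([0,1],\mathring{S}_1^2(\Omega)):\gamma(0)=u_{1},\ \gamma(1)=e\},
\]
and establish two-sided control on $c$. For the lower bound, since $\|u_{1}\|<\rho_{0}<\|e\|$, every path in $\Gamma$ must cross the sphere $S_{\rho_{0}}$, whence $c\geq\inf_{u\in S_{\rho_{0}}}I_{\mu}(u)>0$ by Lemma \ref{lem3.4}. For the upper bound I would test with the explicit segment $\gamma_{0}(\tau)=u_{1}+\tau s_{0}\upsilon_{\gamma}\in\Gamma$, so that
\[
c\leq\max_{\tau\in[0,1]}I_{\mu}(u_{1}+\tau s_{0}\upsilon_{\gamma})\leq\sup_{s\geq0}I_{\mu}(u_{1}+s\upsilon_{\gamma})<\Big(\frac{1}{2}-\frac{1}{2Q_{\lambda}^{\ast}}\Big)S_{HG}^{\frac{Q_{\lambda}^{\ast}}{Q_{\lambda}^{\ast}-1}}-D\mu^{\frac{2}{2-q}},
\]
where the last strict inequality is exactly Lemma \ref{lem3.5}. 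This sandwiches $c$ strictly between $0$ and the threshold appearing in \eqref{3.1}.

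With $c$ in this range, Lemma \ref{lem3.1} guarantees that $I_{\mu}$ satisfies the $(PS)_{c}$ condition, so the Mountain Pass Theorem yields a critical point $u_{2}\in\mathring{S}_1^2(\Omega)$ with $I_{\mu}(u_{2})=c>0$, that is, a weak solution of \eqref{1.1}; since $I_{\mu}(u_{1})<0<I_{\mu}(u_{2})$, it is automatically distinct from $u_{1}$. To upgrade $u_{2}$ to a positive solution I would use the identity $I_{\mu}(|u|)=I_{\mu}(u)$, which holds because $\big|\nabla_{H}|u|\big|=|\nabla_{H}u|$ a.e. and every remaining term of $I_{\mu}$ depends only on $|u|$: replacing each competitor path by its pointwise modulus preserves both endpoints and the maximal value, so the min-max and the associated $(PS)_{c}$ sequence may be taken nonnegative, giving $u_{2}\geq0$ and $u_{2}\not\equiv0$. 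The strong maximum principle (Bony \cite{bo}) then forces $u_{2}>0$ in $\Omega$, and therefore $u_{2}$ is a positive solution of \eqref{1.1} with $I_{\mu}(u_{2})>0$.

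The main obstacle lies not in this assembly but in the strict energy bound $c<(\frac{1}{2}-\frac{1}{2Q_{\lambda}^{\ast}})S_{HG}^{Q_{\lambda}^{\ast}/(Q_{\lambda}^{\ast}-1)}-D\mu^{2/(2-q)}$, which is precisely where the critical nonlocal term would otherwise destroy compactness; this is already supplied by the delicate asymptotics of $\upsilon_{\gamma}$ in Lemma \ref{lem2.2} together with the interaction cross-term estimates carried out in Lemma \ref{lem3.5}. Granting that estimate, the only remaining care is the topological crossing argument for the lower bound on $c$ and the sign-preserving reduction for positivity, both of which are routine. This completes the proof of Lemma \ref{lem3.6}.
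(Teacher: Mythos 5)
Your proof is correct and follows exactly the route the paper intends: the paper states this lemma with only the remark ``From the above discussion,'' and that discussion is precisely the combination you assemble --- the mountain pass geometry of Lemma \ref{lem3.4}, the strict energy estimate of Lemma \ref{lem3.5}, and the $(PS)_c$ condition below the threshold from Lemma \ref{lem3.1}, with $\mu_{\ast}=\min\{\Lambda_0,\Lambda_1\}$. The one step you treat more explicitly than the paper, the reduction to nonnegative paths via $I_{\mu}(|u|)=I_{\mu}(u)$ followed by Bony's maximum principle, is standard and correctly handled.
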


\subsection{Existence of a  positive
 ground state solution of problem
\eqref{1.1}}

 In this subsection,
 we will show that problem
\eqref{1.1} has a positive ground
state solution. Indeed, let
$$
\mathcal {N}=\{u\in \mathring{S}_1^2(\Omega):
u\neq0,\langle I'_{\mu}(u),u\rangle=0\}.
$$
Since any nontrivial solution of
 problem \eqref{1.1} belongs to
$\mathcal {N}$, we can set
$\tau=\inf_{u\in \mathcal
{N}}I_{\mu}(u).$ Clearly, if
$u\in \mathcal {N}$, one also has
$|u|\in \mathcal {N}$ and
$I_{\mu}(|u|)=I_{\mu}(u)$,
and therefore
we can consider a nonnegative
 minimizing sequence
$\{u_{n}\}\subset
\mathcal {N}$ and such that
\begin{equation}\label{3.29}
I_{\mu}(u_{n})\rightarrow\tau
\quad as\  n\rightarrow\infty.
\end{equation}
By Lemma \ref{lem3.3},
$\tau<0$ and
$\{u_{n}\}$ is
bounded in
$\mathring{S}_1^2(\Omega)$. More generally,
suppose that
$u_{n}\rightharpoonup u_{\lambda}$ weakly in
$\mathring{S}_1^2(\Omega)$ and
$u_{n}\rightarrow u_{\lambda}$
strongly in
$L^{p}(\Omega)$ with
$1<p<Q^{*}$. Thus,
$u_{\lambda}\neq0$. In fact, if
$u_{\lambda}=0$ and
$l=\lim_{n\rightarrow\infty}\|u_{n}\|$,
 then since
$u_{n}\in
\mathcal {M}$, one has
\begin{align}\label{3.30}
\|u_{n}\|^{2}
&=I_{\mu}'(u_{n})[u_{n}]+\mu\int_{\Omega}
|u_{n}|^{q}d\xi+\int_{\Omega}\int_{\Omega}
\frac{|u_{n}(\xi)|^{Q_{\lambda}^{\ast}}
|u_{n}(\eta)|^{Q_{\lambda}^{\ast}}}
{|\eta^{-1}\xi|^{\lambda}}
d\eta d\xi \notag \\
&\leq
C(\lambda,Q)
\Big  ( \int_{\Omega}
|u_{n}|^{Q^{\ast}}d\xi
\Big  ) ^{\frac{2Q-\lambda}{Q}}+o(1)\leq
S_{HG}^{-Q_{\lambda}^{\ast}}
\|u_{n}\|^{2Q_{\lambda}^{\ast}}+o(1).
\end{align}
From this, one can infer that either $l=0$ or $l\geq
S_{HG}^{\frac{Q_{\lambda}^{\ast}} {2Q_{\lambda}^{\ast}-2}}$.
 Besides, from \eqref{3.29}, one has
\begin{align}\label{3.31}
\tau
&
=\nonumber\lim_{n\rightarrow\infty}
\Big \{\frac{1}{2}\|u_{n}\|^{2}
-\frac{\mu}{q}\int_{\Omega}
|u_{n}|^{q}d\xi
-\frac{1}{2Q_{\lambda}^{\ast}}
\int_{\Omega}\int_{\Omega}
\frac{|u_{n}(\xi)|^{Q_{\lambda}^{\ast}}
|u_{n}(\eta)|^{Q_{\lambda}^{\ast}}}
{|\eta^{-1}\xi|^{\lambda}}
d\eta d\xi
\Big \}\\
&=
\Big  ( \frac{1}{2}
-\frac{1}{2Q_{\lambda}^{\ast}}
\Big  ) \lim_{n\rightarrow\infty}
\|u_{n}\|^{2}=
\Big  ( \frac{1}{2}
-\frac{1}{2Q_{\lambda}^{\ast}}
\Big  ) l^{2}.
\end{align}
If
$l=0$, then from \eqref{3.31},
 we get
$\tau=0$, which is a
contradiction. Thus
$$
\tau\geq
\Big  ( \frac{1}{2}
-\frac{1}{2Q_{\lambda}^{\ast}}
\Big  ) S_{HG}^{\frac{Q_{\lambda}^{\ast}}
{Q_{\lambda}^{\ast}-1}}.
$$
It follows from Lemma \ref{lem3.5} that
$$
\Big  ( \frac{1}{2}
-\frac{1}{2Q_{\lambda}^{\ast}}
\Big  ) S_{HG}^{\frac{Q_{\lambda}^{\ast}}
{Q_{\lambda}^{\ast}-1}}\leq\tau
<
\Big  ( \frac{1}{2}
-\frac{1}{2Q_{\lambda}^{\ast}}
\Big  )
S_{HG}^{\frac{Q_{\lambda}^{\ast}}
{Q_{\lambda}^{\ast}-1}}
-D\mu^{\frac{2}{2-q}},
$$
which is also a contradiction.
Therefore, we must have
$u_{\lambda}\neq0$ in
$\mathring{S}_1^2(\Omega)$.

On the other hand,
$u_{n}\rightarrow u_{\lambda}$ in
$\mathring{S}_1^2(\Omega)$ is
derived from Lemma \ref{lem3.1}.
 In other words,
$u_{\lambda}$ is a positive
solution of problem (1.1) and
$I_{\mu}(u_{\lambda})\geq\tau$.

Next, we show that
$I_{\mu}(u_{\lambda})\leq\tau$.
Indeed, by the
Fatou Lemma and \eqref{3.28},
 we get
\begin{align}\label{3.32}
\tau
&=\nonumber\lim_{n\rightarrow\infty}
\Big \{I_{\mu}(u_{n})
-\frac{1}{2Q_{\lambda}^{\ast}}I'_{\mu}(u_{n})u_{n}
\Big \}\\
&
=\nonumber\lim_{n\rightarrow\infty}
\Big \{
\Big  ( \frac{1}{2}
-\frac{1}{2Q_{\lambda}^{\ast}}
\Big  ) \|u_{n}\|^{2}-\mu
\Big  ( \frac{1}{q}
-\frac{1}{2Q_{\lambda}^{\ast}}
\Big  ) \int_{\Omega}u_{n}^{q}d\xi
\Big \}\\
&\geq
\Big  ( \frac{1}{2}
-\frac{1}{2Q_{\lambda}^{\ast}}
\Big  ) \|u_{\lambda}\|^{2}-\mu
\Big  ( \frac{1}{q}
-\frac{1}{2Q_{\lambda}^{\ast}}
\Big  ) \int_{\Omega}u_{\lambda}^{q}d\xi.
\end{align}
Furthermore, because
$u_{\lambda}$ is a positive solution of problem
\eqref{1.1}, we have
\begin{align*}
\begin{split}
I_{\mu}(u_{\lambda})&=I_{\mu}(u_{\lambda})
-\frac{1}{2Q_{\lambda}^{\ast}}
I'_{\mu}(u_{\lambda})u_{\lambda}\\
&=
\Big  ( \frac{1}{2}
-\frac{1}{2Q_{\lambda}^{\ast}}
\Big  ) \|u_{\lambda}\|^{2}-\mu
\Big  ( \frac{1}{q}
-\frac{1}{2Q_{\lambda}^{\ast}}
\Big  ) \int_{\Omega}u_{\lambda}^{q}d\xi.
\end{split}
\end{align*}
From \eqref{3.32}, we have
$I_{\lambda}(u_{\lambda})\leq\tau$  and
$I_{\lambda}(u_{\lambda})=\tau$ and
$u_{\lambda}\neq0$. This means
that
$u_{\lambda}$ is a positive
ground state solution of problem
\eqref{1.1}. Consequently,
invoking Lemmas \ref{lem3.3} and
\ref{lem3.6} completes the
proof of Theorem \ref{the1.1}. \qed

\section{High perturbations of problem \eqref{1.1}}

This section focuses on the
proof of Theorem \ref{the1.2}.
To this end, we shall apply
the general Mountain Pass Theorem.

\begin{theorem}\label{the4.1}
{\rm (see Rabinowitz \cite{Rabinowitz})} Let E be a real Banach
space and $I\in C^{1}(E,R)$ satisfying $(PS)$ condition. Suppose
that $I(0) = 0$ and that

$(i)$  there are constants
$\rho,\alpha>0$ satisfying
$I(u)|_{\partial B_{\rho}}\geq \alpha$;

$(ii)$  there exists
$e\in E\backslash\overline{B_{\rho}}$ satisfying
$I(e)\leq0$.

Then I has a critical value
$c\geq\alpha$. Moreover,
$$
c=\inf_{h\in\Gamma}\max_{0\leq
t\leq1}I(h(t))\geq\alpha,
$$
where
$$
\Gamma=\{h\in C([0,1],E):h(0)=1,h(1)=e\}.
$$
\end{theorem}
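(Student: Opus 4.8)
The plan is to treat this as the classical Mountain Pass Theorem and prove it by a minimax argument resting on a deformation lemma, first establishing the geometric lower bound for $c$ and then showing by contradiction that $c$ is attained as a critical value. Set $c=\inf_{h\in\Gamma}\max_{0\le t\le1}I(h(t))$. The first step is to show $c\ge\alpha$. Fix any $h\in\Gamma$; since $h(0)=0$ has norm $0<\rho$ while $h(1)=e$ has norm $>\rho$ (because $e\notin\overline{B_\rho}$), the continuous map $t\mapsto\|h(t)\|$ attains the value $\rho$ at some $t^\ast\in(0,1)$ by the intermediate value theorem. Hence $h(t^\ast)\in\partial B_\rho$, and hypothesis $(i)$ gives $\max_{t}I(h(t))\ge I(h(t^\ast))\ge\alpha$. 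Taking the infimum over $h\in\Gamma$ yields $c\ge\alpha>0$; moreover $c$ is finite, since the segment $t\mapsto te$ belongs to $\Gamma$ and has finite maximal value by continuity of $I$ on a compact set.

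The heart of the proof is to show that $c$ is a critical value of $I$, and here I would argue by contradiction. Suppose $c$ is a regular value, so that $I'(u)\ne0$ for every $u$ in the level set $\{u\in E:I(u)=c\}$. Using the $(PS)_c$ condition one shows that there exist $\varepsilon,\delta>0$ with $\|I'(u)\|\ge\delta$ throughout the strip $\{u:\,|I(u)-c|\le2\varepsilon\}$: otherwise a sequence with $I(u_n)\to c$ and $I'(u_n)\to0$ would, by $(PS)_c$, converge along a subsequence to a critical point at level $c$, contradicting regularity. Because $E$ is only a Banach space, the gradient flow is unavailable, so the next step is to construct a locally Lipschitz pseudo-gradient vector field for $I$ on its regular points and integrate it to obtain a deformation $\eta\in C([0,1]\times E,E)$ with the standard properties: $\eta(0,\cdot)=\mathrm{id}$; $\eta(s,u)=u$ whenever $|I(u)-c|\ge2\varepsilon$; $s\mapsto I(\eta(s,u))$ is nonincreasing; and $\eta(1,\cdot)$ maps $\{I\le c+\varepsilon\}$ into $\{I\le c-\varepsilon\}$. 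This quantitative deformation lemma, which fuses the $(PS)$ condition with the pseudo-gradient construction, is the main technical obstacle of the argument.

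With the deformation in hand the contradiction is immediate. By definition of the infimum I may pick $h\in\Gamma$ with $\max_{t}I(h(t))\le c+\varepsilon$, and shrinking $\varepsilon$ if necessary so that $c-2\varepsilon>0$, I observe that $I(h(0))=I(0)=0$ and $I(h(1))=I(e)\le0$ both lie below $c-2\varepsilon$; hence the endpoints are fixed by $\eta(1,\cdot)$, so $\tilde h:=\eta(1,h(\cdot))$ again belongs to $\Gamma$. The final deformation property then forces $\max_{t}I(\tilde h(t))\le c-\varepsilon<c$, contradicting the definition of $c$ as the infimum of the maximal values over $\Gamma$. Therefore $c$ must be a critical value, and since $c\ge\alpha$ this yields a critical point at a level $c\ge\alpha$ with the stated minimax characterization, completing the proof.
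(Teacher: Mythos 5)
The paper does not prove this statement at all: it is quoted (with the evident typo $h(0)=1$ in place of $h(0)=0$, which you silently and correctly repair) as the classical Ambrosetti--Rabinowitz Mountain Pass Theorem from the cited monograph of Rabinowitz, and your argument --- the intermediate-value estimate giving $c\ge\alpha$, followed by the pseudo-gradient construction, the quantitative deformation lemma fused with the $(PS)_c$ condition, and the contradiction obtained by deforming an almost-optimal path whose endpoints lie below level $c-2\varepsilon$ --- is precisely the standard proof found in that source. Your proposal is correct and takes essentially the same (indeed, the canonical) approach.
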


Next, we prove that the geometric properties
$(i)$ and $(ii)$ of Theorem \ref{the4.1}
 are satisfied by
$I_{\mu}$.

\begin{lemma}\label{lem4.1}
Let
$2<q<Q_\lambda^\ast$.
Then the properties $(i)$ and $(ii)$ of
Theorem $\ref{the4.1}$ are satisfied by the
energy functional
$I_{\mu}$.
\end{lemma}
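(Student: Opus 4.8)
The plan is to verify the two geometric conditions of the Mountain Pass Theorem \ref{the4.1} directly from the structure of the functional $I_\mu$ defined in \eqref{5.1}, exploiting that for $2<q<Q_\lambda^\ast$ the subcritical term $|u|^q$ is now \emph{superquadratic} (unlike the low-perturbation case $1<q<2$), while the Choquard term is supercritical of order $2Q_\lambda^\ast$. First I would establish condition $(i)$. Starting from \eqref{5.1}, I would bound the two nonlinear terms from above using the continuous Sobolev embedding $\mathring{S}_1^2(\Omega)\hookrightarrow L^q(\Omega)$ (valid since $q<Q_\lambda^\ast<Q^\ast$) together with the Hardy--Littlewood--Sobolev inequality \eqref{2.5} and the constant $S_{HG}$ from \eqref{2.7}, to obtain an estimate of the form
\begin{equation*}
I_\mu(u)\geq\frac{1}{2}\|u\|^2-C_1\mu\|u\|^q-C_2\|u\|^{2Q_\lambda^\ast}.
\end{equation*}
Since both exponents $q$ and $2Q_\lambda^\ast$ exceed $2$, the factor $\|u\|^2$ dominates for $\|u\|=\rho$ with $\rho$ small, so $I_\mu(u)\geq\alpha>0$ on $\partial B_\rho$ for a suitable choice of $\rho,\alpha$; here the smallness of $\rho$ must be chosen so that $C_1\mu\rho^{q-2}+C_2\rho^{2Q_\lambda^\ast-2}<\tfrac14$, which is possible independently of the size of $\mu$ because we may first fix $\rho$ and then the two correction terms are simply small when $\rho$ is small.

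Next I would verify condition $(ii)$. For any fixed $u\in\mathring{S}_1^2(\Omega)\setminus\{0\}$ I would examine the fibering map $s\mapsto I_\mu(su)$ as computed in \eqref{3.34}: it equals
\begin{equation*}
\frac{s^2}{2}\|u\|^2-\frac{\mu s^q}{q}\int_\Omega|u|^q\,d\xi-\frac{s^{2Q_\lambda^\ast}}{2Q_\lambda^\ast}\int_\Omega\int_\Omega\frac{|u(\xi)|^{Q_\lambda^\ast}|u(\eta)|^{Q_\lambda^\ast}}{|\eta^{-1}\xi|^\lambda}\,d\eta\,d\xi.
\end{equation*}
Because $2Q_\lambda^\ast>2$ and the Choquard integral is strictly positive, the leading term $-s^{2Q_\lambda^\ast}$ forces $I_\mu(su)\to-\infty$ as $s\to+\infty$, exactly as in \eqref{3.34}. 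Hence I can pick $s_0$ large enough that $e:=s_0u$ satisfies $\|e\|>\rho$ and $I_\mu(e)\leq 0$ (indeed $<0$), giving $e\in E\setminus\overline{B_\rho}$ as required. This step is essentially identical to Lemma \ref{lem3.4}, so it is routine.

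The main subtlety I anticipate is purely in the book-keeping of condition $(i)$: one must confirm that $\rho$ and $\alpha$ can be chosen uniformly, i.e.\ that the mountain-pass geometry survives for the \emph{large} values of $\mu$ that Theorem \ref{the1.2} ultimately requires. The key observation is that in $(i)$ the $\mu$-dependent term carries the factor $\mu\rho^q$; since $q>2$, for each fixed $\mu$ one simply shrinks $\rho$ until $\mu\rho^{q-2}$ is small, so the barrier exists for \emph{every} $\mu>0$, though of course $\rho$ and $\alpha$ then depend on $\mu$. This is acceptable for Lemma \ref{lem4.1}, which only asserts that the geometry holds; the constraint $\mu>\mu^\ast$ enters later, not here, when one must control the mountain-pass level below the threshold $\big(\tfrac12-\tfrac{1}{2Q_\lambda^\ast}\big)S_{HG}^{Q_\lambda^\ast/(Q_\lambda^\ast-1)}$ needed to recover compactness via Lemma \ref{lem3.1}. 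Thus for Lemma \ref{lem4.1} itself there is no genuine obstacle; the only care required is to keep the constants $C_1,C_2$ explicit enough that the dominance of the quadratic term at small scale is transparent.
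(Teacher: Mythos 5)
Your proposal is correct and follows essentially the same route as the paper: condition $(i)$ is obtained from the lower bound \eqref{3.17}, namely $I_\mu(u)\geq\frac12\|u\|^2-\frac{\mu}{q}S^{-q/2}|\Omega|^{(Q_\lambda^\ast-q)/Q_\lambda^\ast}\|u\|^q-\frac{1}{2Q_\lambda^\ast S_{HG}^{Q_\lambda^\ast}}\|u\|^{2Q_\lambda^\ast}$, using that both exponents $q$ and $2Q_\lambda^\ast$ exceed $2$, and condition $(ii)$ from the fibering estimate \eqref{3.34}, where the Choquard term of order $s^{2Q_\lambda^\ast}$ drives $I_\mu(su)\to-\infty$. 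Your added remark that $\rho$ and $\alpha$ may depend on $\mu$ (with the constraint $\mu>\mu^\ast$ entering only later, in the level estimate) is a correct clarification that the paper leaves implicit.
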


\begin{proof}[\bf Proof.]
From \eqref{3.17}, we have
\begin{align}\label{4.1}
\begin{split}
I_{\mu}(u)\geq\frac{1}{2}\|u\|^{2}
-\frac{\mu}{q}S^{
-\frac{q}{2}}|\Omega|^{\frac{Q_{\lambda}^{\ast}-q}
{Q_{\lambda}^{\ast}}}\|u\|^{q}
-\frac{1}{2Q_{\lambda}^{\ast}S_{HG}^{Q_{\lambda}^{\ast}}}
\|u\|^{2Q_{\lambda}^{\ast}}.\\
\end{split}
\end{align}
Now, we can take
$\rho,\alpha>0$ satisfying
$I_{\mu}(u)\geq \alpha$ for all
$u\in \partial{B_{\rho}}$, since
$2<q$,
$2<2Q_{\lambda}^{\ast}$.
Thus, property (i) of
Theorem \ref{the4.1} holds.

Now, we show that property (ii) of
Theorem \ref{the4.1} also holds.
From \eqref{3.34}, one has
\begin{align}\label{4.2}
\begin{split}
I_{\mu}(su)&=\frac{s^{2}}{2}\|u\|^{2}
-\frac{\mu s^{q}}{q}\int_{\Omega}
|u|^{q}d\xi
-\frac{s^{2Q_{\lambda}^{\ast}}}
{2Q_{\lambda}^{\ast}}
\int_{\Omega}\int_{\Omega}
\frac{|u(\xi)|^{Q_{\lambda}^{\ast}}
|u(\eta)|^{Q_{\lambda}^{\ast}}}
{|\eta^{-1}\xi|^{\lambda}}d\eta d\xi\\
&\leq\frac{s^{2}}{2}\|u\|^{2}
-\frac{C(\lambda,Q)s^{2Q_{\lambda}^{\ast}}}
{2Q_{\lambda}^{\ast}}
\Big  ( \int_{\Omega}
|u|^{Q^{\ast}}d\xi
\Big  ) ^{\frac{2Q-\lambda}{Q}}.
\end{split}
\end{align}
Since
$2<2Q_{\lambda}^{\ast}$, we can deduce that
$I_{\mu}(s_{0}u)<0$ and
$s_{0}\|u\|>\rho$ for
$s_{0}$ large enough.
Let
$e=s_{0}u$.  Hence,
$e$ is the desired function and the proof of
property (ii) of Theorem \ref{the4.1} is complete.
\end{proof}

\begin{lemma}\label{lem3.7}
Let
$2<q<Q_\lambda^\ast$. Then for all
\begin{equation}\label{3.35}
c_{\mu}<
\Big  ( \frac{1}{2}
-\frac{1}{q}
\Big  ) S_{HG}^{\frac{Q_{\lambda}^{\ast}}
{Q_{\lambda}^{\ast}-1}},
\end{equation}
$I_{\mu}$ satisfies the
$(PS)_{c_{\mu}}$ condition.
\end{lemma}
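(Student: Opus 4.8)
The plan is to reproduce the concentration-compactness argument of Lemma~\ref{lem3.1}, replacing the multiplier $\tfrac{1}{2Q_\lambda^\ast}$ by $\tfrac1q$ everywhere, which is the natural combination once $q>2$. First I would fix a $(PS)_{c_\mu}$ sequence $\{u_n\}\subset\mathring{S}_1^2(\Omega)$, i.e. $I_\mu(u_n)\to c_\mu$ and $I_\mu'(u_n)\to0$, and prove its boundedness. Since $2<q<Q_\lambda^\ast<2Q_\lambda^\ast$, the constants $\tfrac12-\tfrac1q$ and $\tfrac1q-\tfrac1{2Q_\lambda^\ast}$ are both positive, so
\begin{align*}
c_\mu+1+o(\|u_n\|)
&\ge I_\mu(u_n)-\frac1q\langle I_\mu'(u_n),u_n\rangle\\
&=\Big(\frac12-\frac1q\Big)\|u_n\|^2
+\Big(\frac1q-\frac1{2Q_\lambda^\ast}\Big)
\int_\Omega\int_\Omega\frac{|u_n(\xi)|^{Q_\lambda^\ast}|u_n(\eta)|^{Q_\lambda^\ast}}{|\eta^{-1}\xi|^\lambda}\,d\eta\,d\xi\\
&\ge\Big(\frac12-\frac1q\Big)\|u_n\|^2,
\end{align*}
giving boundedness at once; note that this multiplier deletes the subcritical term, so unlike \eqref{3.3} no Young inequality is required here.

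Next I would extract a subsequence with $u_n\rightharpoonup u$ in $\mathring{S}_1^2(\Omega)$ and $u_n\to u$ in $L^p(\Omega)$ for $1\le p<Q^\ast$, and invoke the concentration-compactness principle on $\mathbb{H}^N$ (Sun et al.~\cite[Theorem 3.1]{sun}) to produce the atoms $\zeta_j,\mu_j,\nu_j$ obeying \eqref{3.5}. To prove $J=\emptyset$ I would argue by contradiction as in \eqref{3.6}--\eqref{3.11}: supposing $\mu_j\ne0$, I test $I_\mu'(u_n)$ against $\psi_{\varepsilon,j}u_n$ with the cut-off \eqref{3.6}. The subcritical term is harmless because $q<Q_\lambda^\ast<Q^\ast$, so, exactly as in \eqref{3.8}, H\"older's inequality makes $\mu\int_\Omega|u_n|^q\psi_{\varepsilon,j}\,d\xi$ vanish as $\varepsilon\to0$, and the gradient cross term vanishes likewise. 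Letting $n\to\infty$ and then $\varepsilon\to0$ yields $\nu_j\ge\mu_j$; combined with $S_{HG}\nu_j^{1/Q_\lambda^\ast}\le\mu_j$ from \eqref{3.5}, this forces $\mu_j\ge S_{HG}^{Q_\lambda^\ast/(Q_\lambda^\ast-1)}$.

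The contradiction then comes from the energy level. Using the same decomposition in the limit and discarding the nonnegative contributions of the weak limit $u$ and of the nonlocal atom, I obtain
\begin{align*}
c_\mu
&=\lim_{n\to\infty}\Big\{\Big(\frac12-\frac1q\Big)\|u_n\|^2
+\Big(\frac1q-\frac1{2Q_\lambda^\ast}\Big)
\int_\Omega\int_\Omega\frac{|u_n(\xi)|^{Q_\lambda^\ast}|u_n(\eta)|^{Q_\lambda^\ast}}{|\eta^{-1}\xi|^\lambda}\,d\eta\,d\xi\Big\}\\
&\ge\Big(\frac12-\frac1q\Big)\mu_j
\ge\Big(\frac12-\frac1q\Big)S_{HG}^{Q_\lambda^\ast/(Q_\lambda^\ast-1)},
\end{align*}
contradicting \eqref{3.35}. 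Hence $J=\emptyset$, so the nonlocal term converges as in \eqref{3.13}. Testing the resulting limit equation against $u$, comparing with $\langle I_\mu'(u_n),u_n\rangle=o(1)$, and using the strong convergence of the subcritical and nonlocal terms forces $\|u_n\|\to\|u\|$; uniform convexity of $\mathring{S}_1^2(\Omega)$ then gives $u_n\to u$ strongly, which is the $(PS)_{c_\mu}$ property.

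The step I expect to be the main obstacle is the energy estimate that pins the threshold. It is essential to use exactly the multiplier $\tfrac1q$, so that the coefficient of $\|u_n\|^2$ is precisely $\tfrac12-\tfrac1q$ as in \eqref{3.35}, and to check that every discarded quantity (the atom $\nu_j$ and all contributions of $u$) is genuinely nonnegative, so that the clean bound $c_\mu\ge(\tfrac12-\tfrac1q)S_{HG}^{Q_\lambda^\ast/(Q_\lambda^\ast-1)}$ survives and yields the contradiction.
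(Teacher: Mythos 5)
Your proposal is correct and follows essentially the same route as the paper: boundedness via the multiplier $\tfrac1q$ (which makes the nonlocal term's coefficient $\tfrac1q-\tfrac1{2Q_\lambda^\ast}>0$ and lets you drop it), then the concentration-compactness dichotomy with the cut-off functions to force $\mu_j\ge S_{HG}^{Q_\lambda^\ast/(Q_\lambda^\ast-1)}$, the energy bound $c_\mu\ge(\tfrac12-\tfrac1q)\mu_j$ contradicting \eqref{3.35}, and finally strong convergence via uniform convexity. No discrepancies with the paper's argument.
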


\begin{proof}[\bf Proof.]
We assume that
$\{u_{n}\}\subset \mathring{S}_1^2(\Omega)$
satisfies
\begin{equation}\label{3.36}
I_{\mu}(u_{n})\rightarrow c_{\mu},
\ \ I_{\mu}'(u_{n})\rightarrow 0
\quad \mbox{as} \quad n\rightarrow\infty,
\end{equation}
where
$c_{\mu}$ is as in \eqref{3.36}. It follows that
\begin{align}\label{3.37}
1+c_{\mu}+o(\|u_{n}\|)\nonumber\geq&
I_{\mu}(u_{n})
-\frac{1}{q}I_{\mu}'(u_{n})u_{n}=
\Big  ( \frac{1}{2}
-\frac{1}{q}
\Big  ) \|u_{n}\|^{2}\\
&\nonumber
+
\Big  ( \frac{1}{q}
-\frac{1}{2Q_{\lambda}^{\ast}}
\Big  ) \int_{\Omega}\int_{\Omega}
\frac{|u_{n}(\xi)|^{Q_{\lambda}^{\ast}}
|u_{n}(\eta)|^{Q_{\lambda}^{\ast}}}
{|\eta^{-1}\xi|^{\lambda}}
d\eta d\xi\\
\geq&
\Big  ( \frac{1}{2}
-\frac{1}{q}
\Big  ) \|u_{n}\|^{2}.
\end{align}
From this we can derive that $\{u_{n}\}$ is bounded in
$\mathring{S}_1^2(\Omega)$.

The rest of the proof is similar to the proof of Lemma \ref{lem3.1}.
We have $\mu_{j}\geq S_{HG}^{\frac{Q_{\lambda}^{\ast}}
{Q_{\lambda}^{\ast}-1}}$ and by using the concentration compactness
principle, we have
\begin{align*}
\label{3.38}
c_{\mu}& =\lim_{n \to\infty}
\Big \{I_{\mu}(u_{n})
-\frac{1}{q}I_{\mu}'(u_{n})u_{n}
\Big \}\\
&\nonumber=\lim_{n
\to\infty}
\Big \{(\frac{1}{2}
-\frac{1}{q})\|u_{n}\|^{2}+
\Big  ( \frac{1}{q}
-\frac{1}{2Q_{\lambda}^{\ast}}
\Big  ) \int_{\Omega}\int_{\Omega}
\frac{|u_{n}(\xi)|^{Q_{\lambda}^{\ast}}
|u_{n}(\eta)|^{Q_{\lambda}^{\ast}}}
{|\eta^{-1}\xi|^{\lambda}}
d\eta d\xi
\Big \}\\
&\nonumber\geq\lim_{n
\to\infty}
\Big \{(\frac{1}{2}
-\frac{1}{q})\|u_{n}\|^{2}
\Big \}\geq(\frac{1}{2}
-\frac{1}{q})\mu_{j}+(\frac{1}{2}
-\frac{1}{q})\|u\|^{2}>
\Big  ( \frac{1}{2}
-\frac{1}{q}
\Big  ) S_{HG}^{\frac{Q_{\lambda}^{\ast}}
{Q_{\lambda}^{\ast}-1}}.
\end{align*}
From \eqref{3.35}, we have $J=\emptyset$. As in the discussion of
Lemma \ref{lem3.1}, one knows that $u_{n}\rightarrow u$ in
$\mathring{S}_1^2(\Omega)$. This completes the proof of Lemma
\ref{lem3.7}.
\end{proof}

\begin{proof}[\bf Proof of Theorem \ref{the1.2}]
We claim that
\begin{equation}\label{4.3}
0<c_{\mu}=\inf_{h\in\Gamma}\max_{0\leq
t\leq1}I_{\mu}(h(t))<
\Big  ( \frac{1}{2}
-\frac{1}{q}
\Big  ) S_{HG}^{\frac{Q_{\lambda}^{\ast}}
{Q_{\lambda}^{\ast}-1}}.
\end{equation}
Assume that \eqref{4.3} holds. Then Lemmas \ref{lem3.7} and
\ref{4.1} and Theorem \ref{the4.1} assure the existence of
nontrivial critical points of $I_{\mu}$.

In order to prove \eqref{4.3}, we choose
$v_{0}\in \mathring{S}_1^2(\Omega)$
such that
$$\|v_{0}\|=1\quad \mbox{and}\quad
\lim_{s\rightarrow+\infty}I_{\mu}(sv_{0})=-\infty.$$
Then
$$
\sup_{s\geq0}I_{\mu}(sv_{0})=
I_{\mu}(s_{\mu}v_{0})
\ \text{ for some
$s_{\mu}>0$.}
$$
  So,
$s_{\mu}$ satisfies
\begin{equation}\label{4.4}
\|s_{\mu}v_{0}\|^{2}=\mu\int_{\Omega}
|s_{\mu}v_{0}|^{q}d\xi
+\int_{\Omega}\int_{\Omega}
\frac{|s_{\mu}v_{0}(\xi)|^{Q_{\lambda}^{\ast}}
|s_{\mu}v_{0}(\eta)|^{Q_{\lambda}^{\ast}}}
{|\eta^{-1}\xi|^{\lambda}}
d\eta d\xi.
\end{equation}
Now, we demonstrate that
$\{s_{\mu}\}_{\mu>0}$ is bounded.
Indeed, suppose that the following hypothesis
$s_{\mu}\geq1$ is satisfied for all
$\mu>0$.
Furthermore, we can deduce from
 \eqref{4.4} that
\begin{align}\label{4.5}
s_{\mu}^{q}&\geq\|s_{\mu}v_{0}\|^{2}=
\mu\int_{\Omega}|s_{\mu}v_{0}|^{q}d\xi
+\int_{\Omega}\int_{\Omega}
\frac{|s_{\mu}v_{0}(\xi)|^{Q_{\lambda}^{\ast}}
|s_{\mu}v_{0}(\eta)|^{Q_{\lambda}^{\ast}}}
{|\eta^{-1}\xi|^{\lambda}}d\eta d\xi\\
&\geq
s_{\mu}^{2Q_{\lambda}^{\ast}}
\int_{\Omega}\int_{\Omega}
\frac{|v_{0}(\xi)|^{Q_{\lambda}^{\ast}}
|v_{0}(\eta)|^{Q_{\lambda}^{\ast}}}
{|\eta^{-1}\xi|^{\lambda}}d\eta
d\xi.
\end{align}
It follows from \eqref{4.5} that
$\{s_{\mu}\}_{\mu>0}$ is bounded, since
$2<2Q_{\lambda}^{\ast}$.

Next, we demonstrate that $s_{\mu}\rightarrow0$ as
$\mu\rightarrow\infty$. Suppose to the contrary, there are
$s_{\mu}>0$ and a sequence $(\mu_{n})_{n}$ with
$\mu_{n}\rightarrow\infty$ as $n\rightarrow\infty$ that satisfying
$s_{\mu_{n}}\rightarrow s_{\mu}$ as $n\rightarrow\infty$. Invoking
the Lebesgue dominated convergence theorem, we attain imply that
$$
\int_{\Omega}|s_{\mu_{n}}v_{0}|^{q}
d\xi\rightarrow\int_{\Omega}
|s_{\mu}v_{0}|^{q}d\xi, \quad
\mbox{as} \quad n\rightarrow\infty.
$$
It now follows that
$$
\mu_{n}\int_{\Omega}|s_{\mu}v_{0}
|^{q}d\xi\rightarrow\infty, \quad
\mbox{as} \quad n\rightarrow\infty.
$$
Thus, invoking \eqref{4.4}, we can show that this cannot happen.
Therefore, $s_{\mu}\rightarrow0$ as $\mu\rightarrow\infty$.
Furthermore, we can apply  \eqref{4.4} to show that
$$
\lim_{\mu\rightarrow\infty}
\mu\int_{\Omega}|s_{\mu}v_{0}|^{q}d\xi=0
$$
and
$$
\lim_{\mu\rightarrow\infty}
\int_{\Omega}\int_{\Omega}
\frac{|s_{\mu}v_{0}(\xi)|^{Q_{\lambda}^{\ast}}
|s_{\mu}v_{0}(\eta)|^{Q_{\lambda}^{\ast}}}
{|\eta^{-1}\xi|^{\lambda}}d\eta d\xi=0.
$$
Therefore,
$s_{\mu}\rightarrow0$ as
$\mu\rightarrow\infty$ and by the definition of
$I_{\mu}$, we get that
$$
\lim_{\mu\rightarrow\infty}
(\sup_{s\geq0}I_{\mu}(sv_{0}))=
\lim_{\mu\rightarrow\infty}
I_{\mu}(s_{\mu}v_{0})=0 .
$$
So, there is
$\mu^{\ast}>0$ such that if
$\mu > \mu^{\ast}$,
 we have
$$
\sup_{s\geq0}I_{\mu}(sv_{0})
<(\frac{1}{2}
-\frac{1}{q})S_{HG}^{\frac{Q_{\lambda}^{\ast}}
{Q_{\lambda}^{\ast}-1}}.
$$
Letting $e=s_{1}v_{0}$ with $s_{1}$ large enough to have
$I_{\mu}(e)<0$, we get
$$
0<c_{\mu}\leq\max_{0\leq t\leq1}
I_{\mu}(h(t))\quad \text{where}
\quad h(t)=ts_{1}v_{0}.
$$
Therefore,
$$
0<c_{\mu}\leq\sup_{s\geq0}
I_{\mu}(sv_{0})<(\frac{1}{2}
-\frac{1}{q})S_{HG}^{\frac{Q_{\lambda}^{\ast}}
{Q_{\lambda}^{\ast}-1}}
$$
for
$\mu$ large enough.
The proof of
Theorem \ref{the1.2} is now complete.
\end{proof}

\section{Proof of Theorem 1.3 }
In order to prove Theorem \ref{the1.3}, we shall use the
Krasnoselskii genus theory \cite{Rabinowitz}. Let X be a Banach
space and let $\Lambda$ denote the family of all closed subsets
$A\subset X\backslash\{0\}$ which are symmetric with respect to the
origin, that is, $u\in A$ implies that also $-u\in A$. If
$z_{1},\cdots,z_{k}$ are in $Z$, then the span of all liner
combinations of $z_{1},\cdots,z_{k}$ is denoted by
$\mbox{span}\{z_{1},\cdots,z_{k}\}$ and is called the subset of $Z$
generated by $z_{1},\cdots,z_{k}$.

\begin{theorem}\label{the5.1}(see Rabinowitz \cite{Rabinowitz})
Let E be an infinite-dimensional Banach space and let $I\in
C^{1}(X)$ be even, with $I(0) = 0$. Assume that $E=X\oplus Y$, where
X is a finite-dimensional space, and I satisfies the following
properties:

$(i)$ There is $\theta>0$ such that $I$ satisfies the $(PS)_{c}$
condition for all $c\in(0,\theta)$.

$(ii)$
There are
$\rho,\alpha>0$ such that
$I(u)\geq \alpha$ for all
$u\in\partial B_{\rho}\bigcap Y$.

$(iii)$ For any finite-dimensional subspace $\tilde{E}\subset E$,
there is $R=R(\tilde{E})>\rho$ such that $I(u)\leq0$ on
$\tilde{E}\setminus B_{R}$.
\end{theorem}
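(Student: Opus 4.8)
The plan is to establish this by Krasnoselskii genus theory and an equivariant minimax scheme, since $I$ is even and the geometry in (ii)--(iii) is exactly a symmetric linking. Writing $\gamma(A)$ for the genus of a closed symmetric set $A\subset E\setminus\{0\}$, I would rely on its standard properties (monotonicity, subadditivity, invariance under odd homeomorphisms, the neighbourhood/continuity property, and $\gamma(S_k)=k$ for the unit sphere of a $k$-dimensional subspace). For each integer $j\geq1$ introduce
\[
\Gamma_j=\{A\subset E\setminus\{0\}:A\ \text{closed, symmetric},\ \gamma(A)\geq j\},\qquad c_j=\inf_{A\in\Gamma_j}\sup_{u\in A}I(u),
\]
a nondecreasing sequence of candidate critical values.

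First I would establish the two-sided bounds that make each $c_j$ usable. For the lower bound, condition (ii) together with the intersection property of the genus gives $c_j\geq\alpha>0$ once $j>\dim X$: if $\gamma(A)>\dim X$ then the odd projection of $A$ onto $X$ cannot avoid $0$, forcing $A\cap Y\neq\emptyset$, and a linking argument (cleanest via Benci's pseudo-index built on the sphere $\partial B_\rho\cap Y$) then shows $\sup_A I\geq\alpha$; this is the place where the Borsuk--Ulam theorem is used. For the upper bound, condition (iii) applied to a $j$-dimensional subspace $\tilde E$ shows $\overline{B_R\cap\tilde E}\in\Gamma_j$ and that $I$ is bounded above on it (it is $\leq0$ outside $B_R$), so $c_j<\infty$.

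The core of the argument is to promote each level to a genuine critical value, and this is where condition (i) is indispensable. I would invoke the symmetric deformation lemma: if $c$ carries no critical point and $(PS)_c$ holds, then for small $\varepsilon$ there is an odd continuous map deforming the sublevel set $\{I\leq c+\varepsilon\}$ into $\{I\leq c-\varepsilon\}$; applying it to a nearly optimal $A\in\Gamma_j$ and using $\gamma(h(A))\geq\gamma(A)$ contradicts the definition of $c_j$. Because (i) only provides $(PS)_c$ for $c\in(0,\theta)$, the entire scheme is confined to that window, so I must verify that the relevant $c_j$ obey $0<c_j<\theta$; in the intended application this is exactly the content of the strict bound against the threshold $\left(\tfrac12-\tfrac1q\right)S_{HG}^{\frac{Q_{\lambda}^{\ast}}{Q_{\lambda}^{\ast}-1}}$ coming from Lemma~\ref{lem3.7}. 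Multiplicity then follows from the genus: distinct levels $c_j<c_{j+1}<\cdots$ yield distinct pairs $\pm u_j$, while if consecutive levels coincide the neighbourhood property forces the critical set at that level to have genus $\geq2$, hence infinitely many points; counting the levels lying below $\theta$ delivers the asserted pairs.

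The step I expect to be most delicate is the interaction between the \emph{local} Palais--Smale condition and the minimax: one must design the test families so that all minimax values stay strictly below $\theta$ (or the deformation lemma is unavailable) while remaining above $\alpha$ (so the critical points are nontrivial and occur in genuine $\pm$ pairs). The supporting technical points requiring care are the genus intersection estimate $\gamma(A)\leq\gamma(A\cap Y)+\dim X$ via Borsuk--Ulam and the verification of the compactness/deformation machinery on the noncompact space $\mathring{S}_1^2(\Omega)$.
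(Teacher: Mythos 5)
First, a point of reference: the paper does not prove this statement. It is quoted from Rabinowitz \cite{Rabinowitz}, and even the conclusion is not inside the theorem environment but in the paragraph that follows it, where the classes $W_n$, $\Gamma_i$ and the levels $c_i$ are defined. So your proposal is being measured against the standard proof from that book, not against an argument in this paper.

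Your outline contains one genuine gap. You define
\[
\Gamma_j=\{A\subset E\setminus\{0\}:\ A\ \text{closed and symmetric},\ \gamma(A)\geq j\},\qquad c_j=\inf_{A\in\Gamma_j}\sup_{u\in A}I(u),
\]
and claim $c_j\geq\alpha$ once $j>\dim X$. For these classes that is false. Take any $j$-dimensional subspace $Z\subset E$; by hypothesis (iii) there is $R>\rho$ with $I\leq 0$ on $Z\setminus B_R$, and the sphere $A=\partial B_r\cap Z$ with $r>R$ is closed, symmetric, avoids $0$, has $\gamma(A)=j$, yet satisfies $\sup_A I\leq 0$. Hence $c_j\leq 0$ for every $j$, and the plain-genus minimax cannot detect the mountain-pass geometry at all. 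The implication you invoke ($\gamma(A)>\dim X\Rightarrow A\cap Y\neq\emptyset$) is correct but insufficient, because (ii) controls $I$ only on $\partial B_\rho\cap Y$, not on all of $Y$, and the intersection point produced may have norm $r\neq\rho$. This is exactly why Rabinowitz's classes $\Gamma_{i}=\{\varphi(\overline{\Upsilon_n\setminus V}):\varphi\in W_n,\ n\geq i,\ \gamma(V)\leq n-i\}$ insist that the competitors be images of \emph{odd maps equal to the identity on} $\partial B_{R_n}\cap X_n$: that boundary condition is what forces, via the Borsuk--Ulam intersection lemma in his proof, every competitor to meet $\partial B_\rho\cap Y$ and hence gives the lower bound $\alpha$ for $i>\dim X$. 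Your parenthetical appeal to Benci's pseudo-index is a legitimate alternative repair, but then the levels must be defined through the pseudo-index from the outset; as written, your definition of $c_j$ and your claimed lower bound are incompatible. The remainder of your outline --- monotonicity and finiteness of the levels from (iii), the odd deformation lemma applied only in the window $(0,\theta)$ where $(PS)_c$ is available, and $\gamma(K_c)\geq p+1$ when $p+1$ consecutive levels coincide --- matches the standard argument and is fine once the minimax classes are corrected.
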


Moreover, suppose that $X$ is $k$-dimensional and
$X=\mbox{span}\{z_{1},\cdots,z_{k}\}$. For $n\geq k$, inductively
select $z_{n+1}\notin X_{n}= \mbox{span}\{z_{1},\cdots,z_{n}\}$. Let
$R_n = R(X_n)$ and $\Upsilon_n=B_{R_{n}}\bigcap X_n$. Define
$$
W_n=\{\varphi\in C(\Upsilon_n,E):
\varphi|\partial_{B_{R_{n}}\bigcap X_n}=
id \ \mbox{and}\  \varphi
\ \mbox{is odd}\}
$$
and
$$
\Gamma_{i}=
\{\varphi(\overline{\Upsilon_n\setminus V}):
\varphi\in W_n,n\geq i,V\in\Lambda,\gamma(V)\leq n-i\},
$$
where
$\gamma(V)$ is the
 Krasnoselskii genus of
$V$. For
$i\in N$,
let
$$
c_i=\inf_{X\in\Gamma_{i}}
\max_{u\in X}I(u).
$$
Hence, $0\leq c_{i}\leq c_{i+1}$ and $c_{i}<\theta$ for $i>k$. Then
we get that $c_{i}$ is a critical value of $I$. Moreover, if
$c_{i}=c_{i+1}=\cdots=c_{i+p}=c<\theta$
 for
$i>k$, then
$\gamma(K_{c})\geq p+1$, where
$$
K_{c}=\{u\in E:I(u)=c \quad\mbox{and}
\quad I'(u)=0\} .
$$

\begin{proof}[\bf Proof of Theorem \ref{the1.3}]
It is well known that
$\mathring{S}_1^2(\Omega)$ is a reflexive
Banach space and
$I_{\mu}\in C^{1}(\mathring{S}_1^2(\Omega))$. The
energy functional
$I_{\mu}$ satisfies
$I_{\mu}(0)=0$. Our proof of Theorem  \ref{the1.3} will procede in 3 steps.

{\bf Step 1.}
 The argument is similar to the
proof of (i) and (ii) in Theorem
\ref{the4.1}. We can see that
$(ii)$ and
$(iii)$ of Theorem
\ref{the5.1} are satisfied.

{\bf Step 2.}
 We show that there is a sequence
$(\Psi_{n})_{n}\subset R^{+}$, with
$\Psi_{n}\leq \Psi_{n+1}$, satisfying
$$
c_{n}^{\mu}=\inf_{X\in\Gamma_{n}}
\max_{u\in X}I_{\mu}(u)<\Psi_{n}.
$$
For this purpose, applying an argument given in Wei and Wu
\cite{Wei}, according to the definition of $c_{n}^{\mu}$, one has
$$
c_{n}^{\mu}=\inf_{X\in\Gamma_{n}}
\max_{u\in X}I_{\mu}(u)\leq
\inf_{X\in\Gamma_{n}}
\max_{u\in X}\{\|u\|^{2}
-\frac{1}{2Q_{\lambda}^{\ast}}
\int_{\Omega}\int_{\Omega}
\frac{|u(\xi)|^{Q_{\lambda}^{\ast}}
|u(\eta)|^{Q_{\lambda}^{\ast}}}
{|\eta^{-1}\xi|^{\lambda}}d\eta
d\xi\}.
$$
Set
$$
\Psi_{n}=\inf_{X\in\Gamma_{n}}
\max_{u\in X}\{\|u\|^{2}
-\frac{1}{2Q_{\lambda}^{\ast}}
\int_{\Omega}\int_{\Omega}
\frac{|u(\xi)|^{Q_{\lambda}^{\ast}}
|u(\eta)|^{Q_{\lambda}^{\ast}}}
{|\eta^{-1}\xi|^{\lambda}}d\eta
d\xi\}.
$$
Then
$\Psi_{n}<\infty$ and
$\Psi_{n}\leq \Psi_{n+1}$,
 by the definition of
$\Gamma_{n}$.

{\bf Step 3.}
  We show that problem
  \eqref{1.1} has at least
$n$ pairs of
weak solutions.
 As in \eqref{4.3}, a similar
 discussion yields that there exists
$\mu^{\ast\ast}>0$ satisfying
$$
c_{n}^{\mu}\leq\Psi_{n}<(\frac{1}{2}
-\frac{1}{q})S_{HG}^{\frac{Q_{\lambda}^{\ast}}
{Q_{\lambda}^{\ast}-1}}\ \
\mbox{for\ all}\ \mu > \mu^{\ast\ast}.
$$
Thus, one has
$$0<c_{1}^{\mu}\leq c_{2}^{\mu}\leq\cdots\leq
c_{n}^{\mu}<\Psi_{n}<(\frac{1}{2}
-\frac{1}{q})S_{HG}^{\frac{Q_{\lambda}^{\ast}}
{Q_{\lambda}^{\ast}-1}}.$$ An application of Rabinowitz
\cite[Proposition 9.30]{Rabinowitz} ensures that the levels
$c_{1}^{\mu}\leq c_{2}^{\mu}\leq\cdots\leq
 c_{n}^{\mu}$
  are critical
values of
$I_{\mu}$.

If $c_{i}^{\mu}=c_{i+1}^{\mu}$ where $i = 1,2,\cdots,k-1$, then by
Ambrosetti and Rabinowitz \cite[Remark 2.12 and Theorem 4.2]{am},
the set $K_{c_{i}^{\mu}}$ consists of infinite number of different
points and problem \eqref{1.1} has infinite numbers of weak
solutions. Hence, problem \eqref{1.1} has at least $n$ pairs of
solutions. The proof of Theorem \ref{the1.3} is thus complete.
\end{proof}

\noindent {\bf Acknowledgements.} Song was supported by the National
Natural Science Foundation of China (No. 12001061), the Science and
Technology Development Plan Project of Jilin Province, China
(No.20230101287JC) and Innovation and Entrepreneurship Talent
Funding Project of Jilin Province (No.2023QN21). Repov\v{s} was
supported by the Slovenian Research Agency program No. P1-0292 and
grants Nos. J1-4031, J1-4001, N1-0278, N1-0114, and N1-0083.

\end{document}